\documentclass[11pt,a4paper,oneside]{amsart}
\usepackage{graphicx}
\usepackage{tikz}
\usetikzlibrary{patterns}
\usepackage[utf8]{inputenc}

\usepackage{amsmath}
\usepackage{amssymb}
\usepackage{amsthm}
\usepackage[square,numbers]{natbib}

\usepackage{xifthen}

\usepackage{aliascnt}
\usepackage[bookmarksopen,bookmarksdepth=2,backref=none,hidelinks]{hyperref}

\theoremstyle{definition}
\newtheorem{definition}{Definition}[section]
\newaliascnt{lemma}{definition}
\newaliascnt{theorem}{definition}
\newaliascnt{corollary}{definition}
\newaliascnt{proposition}{definition}
\newaliascnt{assumption}{definition}
\newaliascnt{remark}{definition}
\newaliascnt{example}{definition}
\newaliascnt{conjecture}{definition}
\usepackage{esint}

\newtheoremstyle{remark2}{}{}{}{}{\bfseries}{.}{ }{}
\theoremstyle{remark2}

\newenvironment{remark}
  {\pushQED{\qed}\remarkx}
  {\popQED\endremarkx}

\theoremstyle{plain}
\newtheorem{lemma}[lemma]{Lemma}
\newtheorem{theorem}[theorem]{Theorem}

\newtheorem{proposition}[proposition]{Proposition}


\newcommand{\nablasym}{{\varepsilon}}

\aliascntresetthe{lemma}
\aliascntresetthe{theorem}
\aliascntresetthe{corollary}
\aliascntresetthe{proposition}
\aliascntresetthe{remark}
\aliascntresetthe{conjecture}
\aliascntresetthe{assumption}

\newcommand{\R}{\ensuremath{\mathbb{R}}}

\newcommand{\N}{\ensuremath{\mathbb{N}}}

\newcommand{\abs}[1]{\left|{#1}\right|}
\newcommand{\norm}[2][]{\left\|#2\right\|_{{#1}}} 
\newcommand{\inner}[3][]{\left\langle #2,#3 \right \rangle_{#1}} 

\DeclareMathOperator{\diver}{div}
\DeclareMathOperator{\cof}{cof}
\DeclareMathOperator{\tr}{tr}

\DeclareMathOperator{\Lip}{Lip}
\newcommand{\bog}{\mathcal{B}}


\setlength{\textwidth}{164.5mm}
\setlength{\oddsidemargin}{-2mm}
\setlength{\evensidemargin}{0mm}
\setlength{\topmargin}{-14mm}
\setlength{\textheight}{260mm}

\numberwithin{equation}{section}


\newcommand{\seb}[1]{{\color{blue}#1}}

\usepackage{todonotes}
\allowdisplaybreaks

\title[Variational methods for FSI]{Variational methods for fluid-structure interaction and porous media}
\author{B.~Bene\v{s}ov\'{a}, M.~Kampschulte, S.~Schwarzacher}
\address{Department of Mathematics and Physics, Charles University Prague,  \nolinkurl{benesova@karlin.mff.cuni.cz}, \nolinkurl{kampschulte@karlin.mff.cuni.cz}, \nolinkurl{schwarz@karlin.mff.cuni.cz} }
\date{\today}
\keywords{Mathematics for continuum mechanics, Fluid poroelastic structure interactions, Minimizing movements, Navier-Stokes equations, Elastic solids, Hyperbolic evolutions, Coupled systems of PDEs}

\begin{document}
 \begin{abstract}
 In this work we consider a poroelastic flexible material that may deform largely which is situated in an incompressible fluid driven by the Navier-Stokes equations in two or three space dimensions. By a variational approach we show existence of weak solutions for a class of such coupled systems. We consider the unsteady case, this means that the PDE for the poroelastic solid involves the Frechet derivative of a non-convex functional as well as (second order in time) inertia terms. 
 \end{abstract}

 \maketitle

 \section{Introduction}

\emph{Poromechanics} or \emph{mechanics of porous media} has been a lively area of research in engineering and continuum mechanics (see e.g. the monographs \cite{de2006trends,coussy2004poromechanics}) as the possible applications of the proposed theories are numerous, ranging from the mechanics of tissue and biological materials in general (see e.g.\ \cite{ogden2006mechanics}) to the recently important breathing through masks. In many of these cases, the poroelastic matrix is not isolated but instead immersed into a fluid which may flow through the elastic structure. Naturally, then, the porous medium and the free fluid are interacting with each other. Indeed on the mutual interface they both exert pressure and stresses to each other and on the top of that the fluid may enter the pores of the structure and flow through it. Such a setting is referred to as \emph{fluid-pororelastic structure interaction (FPSI)} and will be the subject of the present work. The main result of this paper is the well-posedness for a class of FPSI  problems allowing large deformations and involving incompressible fluids driven by the Navier-Stokes equations in two or three space dimensions.

In order to predict the response of the FPSI-system, we need to devise a model for the free fluid, the poroelastic structure and prescribe suitable interface conditions. Each of the tasks can be approached on several scales, ranging from the microscopic to the macroscopic one. For the former the skeleton is modeled in detail and a (yet another) fluid-structure interaction problem between the skeleton and the circumflowing fluid is set up. More common in the engineering literature, however, is a more \emph{macroscopic approach} where the porous medium is assumed to be saturated by the fluid, i.e.\ they form a kind of ``mixture'' in each material point and an \emph{ad hoc} form of the stress tensor is prescribed for the porous medium \cite{chapelle2014general}. Such an approach covers the early and famous models for the porous medium due to Darcy \cite{darcy1856fontaines}, Brinkman \cite{brinkman1949calculation} or Biot \cite{biot1941general} and will be also the one we follow in this contribution. Let us also note that a justification of the proposed models by homogenization is highly  relevant indeed and some results concerning the aforementioned standard models for porous media have already been obtained, see e.g. \cite{All90II,Conca1,hornung1996homogenization}.

Following the macroscopic approach, we will assume that the porous medium is fully saturated by the fluid and can be completely described by the fluid velocity $v$ and the deformation of the solid $\eta$ (see Section \ref{sec:mod} for a more detailed modeling description). We will take a \emph{variational point of view} for modeling as well as for the analysis by defining the constitutive behavior of the mechanical system by prescribing its \emph{stored energy} as well as \emph{dissipation functional} (dissipation pseudo-potential). Such an approach has been advocated by many authors (e.g.\ \cite{halphenMateriauxStandardGeneralises1975} for solids or \cite{ziegler.h:some,ziegler.h.wehrli.c:derivation, ziegler.h.wehrli.c:on} for fluids) as it may simplify the modeling.  On the top of that, we argue in Section \ref{sec:2} that the variational approach is essential also from the point of view of \emph{mathematical analysis} if a \emph{large-strain, non-linear} model is to be used for the involved solid. Indeed, in such a case, the Helmholtz free energy cannot be chosen as a convex functional for physical restrictions and thus the resulting differential operator is not monotone.

In the case of FPSI, the variational approach may also facilitate the derivation of boundary conditions at the interface of the poroelastic medium itself and the fluid into which it is immersed, see e.g.\ \cite{dell2009boundary}. Prescribing the above mentioned energy and dissipation for the whole system then automatically yields a balance of stresses at this boundary and, additionally, imposes a ``weak continuity'' condition for the fluid velocity over the porous media interface, as we ask for its derivatives to be globally integrable. Nonetheless, generalizations are, of course, possible. For this see in particular the notes at the end of Section 2. 

While the engineering literature on models in poroelasticity and FPSI seems to be rich, the mathematical literature studying well-posedness of the FPSI is, to the authors' knowledge, rather scarce. Interaction of the linear or non-linear poroelastic medium with Stokes flow has been studied in \cite{showalter2005poroelastic,ambartsumyan2019nonlinear,bociu2020multilayered}. Moreover, the coupling of the Stokes and Darcy flow has been studied (see e.g. \cite{wilbrandt2019stokes} for an overview) but here, intrinsically, the region occupied by the porous medium is fixed a-priori. Similarly, the interaction of the Biot-poroelastic medium with a Navier-Stokes flow was studied in \cite{cesmelioglu2017analysis}, but again with the interface wall between the porous solid and fluid fixed. Thus, well-posedness of a FPSI problem featuring a fully non-linear poroelastic solid that may undergo large deformations coupled to a Navier-Stokes fluid has not been addressed in literature so far. In this contribution, we work exactly in this setting and we establish \emph{existence of weak solutions}. In order to do so, it is crucial to approach the modeling as well as the subsequent analysis in an \emph{energetic (or variational)} way to access methods from the calculus of variations. We provide an overview of the analytical details of the approach in Section \ref{sec:2} but would like to emphasize here that working the proposed framework not only allows to prove the result presented here but also paves the road for studying further FPSI models. Indeed, models including dependence of the stresses on e.g. the porosity, the porous pressure of the fluid or more general coupling conditions are accessible to the analysis, though a careful investigation will be needed.

This work is built up as follows: In Section \ref{sec:mod} we present the model considered in this work and formulate the main results. In Section \ref{sec:2} we overview the ingredients of the variational approach and in Section \ref{sec:4} we present the proof of the main result, Theorem \ref{thm:main}.

\section{Modeling and main results} 
\label{sec:mod}
  
To set up the porous media problem, we consider a regular enough\footnote{The regularity of the boundary of $\Omega$ as well as $Q$ only fully comes into play when discussing contact of the solid with $\partial \Omega$. This is discussed in a bit more detail in \cite{BenKamSch20}, and will be thoroughly discussed in a forthcoming paper involving the second author. For the purpose of the paper at hand, we ignore this aspect completely and just require enough regularity to meaningfully talk about boundary values.} spatial domain $\Omega \subset \R^n$ in which both the porous medium an the circumlying fluid are contained. 
	
 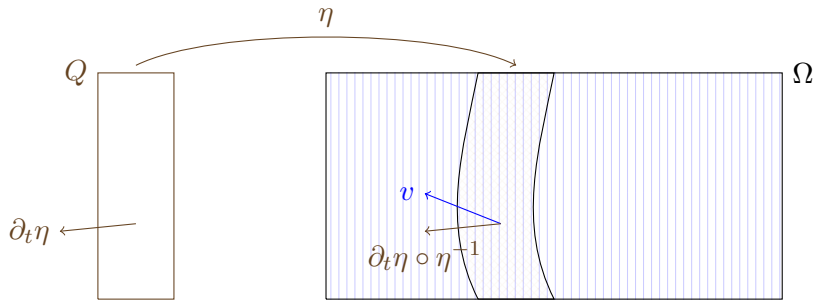
\begin{figure}[ht]
  
 \begin{center}
 \definecolor{darkbrown}{RGB}{101,67,33}
 \begin{tikzpicture}
  \draw[color=darkbrown] (0,0) -- (1,0) -- (1,3) -- (0,3) node[left] {$Q$} -- (0,0);

  \fill[pattern=vertical lines,pattern color=blue, opacity =.2] (3,0) -- (9,0) -- (9,3) node[right] {$\Omega$} -- (3,3) --(3,0);
  \draw (3,0) -- (9,0) -- (9,3) node[right] {$\Omega$} -- (3,3) --(3,0);
  \fill[pattern=crosshatch,pattern color=brown, opacity=.2] (5,0) .. controls +(-.5,1) and +(-.2,-1) .. (5,3) -- (6,3) .. controls +(-.2,-1) and +(-.5,1) .. (6,0)--(5,0);
  \draw (5,0) .. controls +(-.5,1) and +(-.2,-1) .. (5,3) -- (6,3) .. controls +(-.2,-1) and +(-.5,1) .. (6,0)--(5,0);
  
  \draw[->,color=darkbrown] (.5,3.1) .. controls +(1,.5) and +(-1,.5) .. node[above] {$\eta$} (5.5,3.1);
  
  \draw[->,color=blue] (5.3,1) -- +(-1,.4) node [left] {$v$};
  \draw[->,color=darkbrown] (5.3,1) -- +(-1,-.1) node[below] {$\partial_t \eta \circ \eta^{-1}$};
  \draw[->,color=darkbrown] (.5,1) -- + (-1,-.1) node[left] {$\partial_t \eta$};
 \end{tikzpicture}
 \end{center}
 \caption{A short illustration of the geometry involved in the model.}
 \end{figure}
 
As already announced, we assume that the poroelastic medium is a perfectly homogenized mixture of the solid and the fluid in each material point while outside the porous medium the fluid is found in its ``pure" state. Thus, the fluid is allowed to flow through the whole of $\Omega$, including the part already occupied by the solid. The state of the solid will be described by its deformation, a map $\eta:Q\to \Omega$; i.e.\ from some regular enough reference domain $Q \subset \R^n$ to the container $\Omega$. We choose to fix ``boundary values'' $\eta|_P = \gamma$ for some subset $P \subset \overline{Q}$, but technically this is optional. The deformation $\eta$ additionally fixes the geometry of the porous medium $\eta(Q)$.

The fluid, on the other hand, is described by its velocity $v: \Omega \to \R^n$ and pressure $p: \Omega \to \R^n$ with $\diver v = 0$. In order to simplify the discussion, we will assume no-slip conditions at the boundary of $\Omega$, i.e.\ $v|_{\partial \Omega} = 0$. 
As the fluid is assumed incompressible, the pressure will later ``disappear'' from the weak formulation thanks to the choice of appropriate test functions.

Having defined all the quantities involved, we can now turn back to modeling by prescribing the stored energy. We assume an additive decomposition of the energy for the fluid and the solid. However, as the fluid is assumed to be purely Newtonian and incompressible, no energy can be stored in the fluid. Thus, the stored energy consists just of the solid one denoted as $E(\eta)$. The overall energy is then given by the sum
\begin{align*}
\int_\Omega \frac{\rho_f}{2} \abs{v}^2 dy + \int_Q \frac{\rho_s}{2} \abs{\partial_t \eta}^2 dx + E(\eta). 
\end{align*}
Here $\rho_f$ and $\rho_s$ are given mass densities of the fluid and solid respectively.

To complete the modeling we need to prescribe a form of $E(\eta)$. For simplicity, we stick to the the Saint Venant-Kirchhoff energy enriched with a term introducing resistance of the solids to infinite compression and a higher-order gradient regularisation:
\begin{align}
E(\eta) &:= \begin{cases}
 \int_Q \frac{1}{8}|\nabla \eta^T \nabla \eta-I|^2_\mathcal{C} + \frac{1}{(\det \nabla \eta)^a} +\frac{1}{q} \abs{\nabla^2 \eta }^q dx & \text{if $\det \nabla \eta > 0$ a.e. in $Q$ } \\
+ \infty & \text{otherwise} 
\end{cases} \label{st-venant}
\end{align}
where we used the notation $|\nabla \eta^T \nabla \eta-I|^2_\mathcal{C}:= \big(\mathcal{C} (\nabla \eta^T \nabla \eta- I) \big) \cdot \big( \nabla \eta^T \nabla \eta- I \big)$, with $\mathcal{C}$ being a fourth order positive definite tensor of elastic constants, $q>n$ and $a>\frac{qn}{q-n}$. 

\begin{remark}[Non-simple materials]
\label{non-simple}
Let us stress that the proposed energy functional for the solid depends not only on the \emph{deformation gradient}, which is the standard setting, but it also includes a term depending on the \emph{second gradient of the deformation}. Thus, the proposed model belongs to the class of so-called \emph{non-simple} materials that have been proposed by Toupin \cite{toupin1962elastic} and later further developed in other works, e.g. \cite{podio2010hypertractions,vsilhavy1985phase,fried2006tractions}. 
In the mathematical literature concerned with the large-deformation regime this concept is often employed to gain additional, necessary regularity. Also the present work relies on it. Indeed, at the heart of our variational approach (see also Section \ref{sec:2}) is that we will be able to solve a time-discrete variant of the set of PDEs in \eqref{strong} by solving a minimization problem and then relying on the fact that this minimizer solves the associate Euler-Langrage equation. However, as our energy functional takes infinite values, the relation between the functional and the Euler-Lagrange equation is not straighforward. Indeed, the minimizer may lie almost at the boundary of the domain of $E(\eta)$ so that taking variations at the minimizer would not be possible. This is indeed a well-known problem in mathematical elasticity \cite{Bal02} that has not found a satisfactory solution to date. Let us just mention that even in the one-dimensional setting explicit examples have been found in \cite{ball1987one} that illustrate this exact phenomenon: a minimizer of the functional exists but it does not satisfy the Euler-Lagrange equation.

To the authors' knowledge, as of today, the only setting where it is known that this phenomenon does not occur is the one of \emph{non-simple materials}. Indeed, Healey and Kr\"{o}mer have found in \cite{healeyInjectiveWeakSolutions2009} that if the indices $q$ and $a$ in $E(\eta)$ are chosen appropriately, then every deformation of a finite energy has a uniform lower bound for its Jacobian. Thus, the minimizer is well separated from the boundary of the domain in $E(\eta)$ and variations at the minimizer can be performed.
\end{remark}

Let us just remark that the second term in $E(\eta)$ assures that the stored energy blows up whenever $\det({\nabla \eta}) \to 0$ and as such penalizes compression (in fact we get a uniform energy dependent lower bound on the determinant, see \cite{healeyInjectiveWeakSolutions2009}). This is a crucial requirement non-linear elasticity (see e.g. \cite{CiarletBook3}). We also stress that such a requirement alone rules out convexity of the stored energy, albeit when $E(\eta)$ is of the form of \eqref{st-venant} also the first term is not convex.

Much more challenging for the analysis than the non-convex terms in the energy is the fact, that the admissible set of deformations has to be chosen carefully; in particular it is bound to be a non-convex set. This is related to the physical expectation that any admissible deformation will be injective.
Indeed, while it might be possible to make sense of the equations of the problem even in the case that two different parts of the solid occupy the same Eulerian space, physically such a situation is of course nonsensical. Now observe that locally the blow-up of the determinant already guarantees injectivity of any deformation of finite energy. But in addition to the local injectivity we further have to take into account the important non-convex restriction of global injectivity.
What thus turns out to be a good choice for the admissible set of $\eta$ is the following set that is in coherence with the celebrated \emph{Ciarlet-Ne\v{c}as condition} 
proposed in~\cite{CiaNec87}: 
\begin{align}
\label{eq:etaspace}
  \mathcal{E} := \left\{\eta \in W^{2,q}(Q;\Omega)\,:\, E(\eta) < \infty,\, \abs{\eta(Q)} = \int_Q \det \nabla \eta \,dx \right\}.
 \end{align}
 Here, the finite energy guarantees local injectivity and together with the last condition we have that any $C^1$-local homeomorphism is globally injective except for possible touching at the boundary. Hence in the following we will construct deformations in the set \eqref{eq:etaspace}. In fact the set $\mathcal{E}$ only includes deformations for which the dissipation function $R$ of the solid deformation, that we introduce next, is also well defined.

As for the dissipation function, we also assume an additive splitting of the fluid and solid dissipation, respectively. In addition to that we introduce a phenomenological coupling term, a \emph{drag term}, that introduces additional dissipation if the velocities of the fluid and solid are not equal. For the purposes of this paper we stick to the simplest possible setting when the drag force is proportional to the difference of the solid and fluid velocity (see \cite{rajagopal2007hierarchy}); this is also the setting within which, upon further simplifications, the law of Darcy or the equations of Brinkman are derived \cite{rajagopal2007hierarchy}. However since one of these velocities is given in Lagrangian and the other in Eulerian representation, we need to transform one of them into the proper frame, the easier of the two options being the Lagrangian. Thus, we obtain for the the dissipation functional:
\begin{align*}
\mathcal{D}= \underbrace{\frac{\nu}{2}\norm[\Omega]{\nablasym v}^2}_\text{fluid dissipation} + \underbrace{R(\eta,\partial_t \eta)}_\text{solid disspation} + \underbrace{A(\eta,\partial_t \eta- v\circ \eta)}_\text{drag potential}
\end{align*}
where 
\[
A(\eta,\phi):=\int_Q \frac{1}{2} \phi^T \cdot a(\nabla \eta) \cdot \phi \,dx
\]
 and  $a:\{M \in \R^{n\times n}: \det M > 0 \} \to \R^{n\times n}$ is a smooth function mapping into the set of positive semi-definite symmetric matrices. Note that the drag potential is quadratic, so that the drag force depends linearly, through a factor depending on $\nabla \eta$, on the difference of the two velocities. This quite general dependence on $\nabla \eta$ is necessary to model anisotropy in the correct frame independent way.\footnote{If we assume the simpler case of an isotropic solid with regards to drag, we can instead use $A(\eta,b) = \frac{a_0}{2} \norm[Q]{b}^2$, but doing so does not fundamentally change any of the later computations. Note also that for physical reasons the drag does only depend on the relative and not on the absolute velocities.} Finally, we need to prescribe the solid dissipation, for which we choose the following form which leads to an viscoelastic solid of Kelvin-Voigt type:
 \begin{align}
R(\eta,\partial_t\eta) &:= \int_Q |(\nabla \partial_t \eta)^T \nabla \eta + (\nabla \eta)^T (\nabla \partial_t \eta)|^2 dx = \int_Q |\partial_t (\nabla \eta^T \nabla \eta)|^2 dx.  \label{kelvin-voigt}
 \end{align}

Here, let us notice that the {\em dissipation of the solid} depends on the state. This is indeed necessary to assure frame indifference~\cite{antman}. 

 
With all quantities in hand, we can now write down the (formal) energy equality
\begin{align*}
 &\underbrace{E(\eta(T)) + \frac{\rho_s}{2}\norm[Q]{\partial_t \eta(T)}^2 + \frac{\rho_f}{2}\norm[\Omega]{v(T)}^2}_\text{energy at final time $T$} + \underbrace{\int_0^T \left[ \nu \norm[\Omega]{\nablasym v}^2 + 2R(\eta,\partial_t \eta) + 2A(\eta,\partial_t \eta - v \circ \eta) \right] dt}_\text{dissipated energy} \\
 &= \underbrace{E(\eta_0) + \frac{\rho_s}{2}\norm[Q]{b}^2 + \frac{\rho_f}{2}\norm[\Omega]{v_0}^2}_\text{energy at the initial time $0$} + \underbrace{\int_0^T \inner[\Omega]{f}{v} dt.}_\text{power of the external forces}
\end{align*}
Here $f$ is an external force acting on the fluid. Forces acting on the solid can be considered in a similar fashion. We use the subscript ``0'' to denote the initial data for the solid deformation and the fluid velocity given at $t=0$, while $b$ is used for the initial solid velocity.

Once the stored energy and the dissipation function have been specified, we embark on giving a weak and a strong formulation of the considered problem. This formulation needs to be closed using boundary conditions on the interface between the pure fluid and the porous media. We do so in the least invasive way, by requiring weak continuity of the fluid ({in the sense that} $v\in W^{1,2}(\Omega;\R^n)$) and stress free boundary condition for the solid deformation.

Now, we start with the weak formulation and fix the space of test functions, which consists of the pairs $(\phi,\xi) \in W^{2,q}(Q;\R^n) \times W^{1,2}_0(\Omega;\R^n)$ with $\phi|_P = 0$. Taking the derivatives of stored energy and dissipation with respect to position and velocity respectively, in directions $\phi$ and $(\phi,\xi)$ gives us the corresponding forces and we end up with:
\begin{definition}
\label{def:weak}
We call the pair $(\eta, v) \in L^\infty(0,T;\mathcal{E})\cap W^{1,2}(0,T;W^{1,2}(Q)) 
\times L^2(0,T;W^{1,2}_0(\Omega;\R^n))$ with $\eta|_P = 0$ and $\eta(0)= \eta_0$ satisfying
\begin{align} 
 \label{eq:weakSol} 
\inner[Q]{b}{\phi(0)}+\inner[\Omega]{v_0}{\xi(0)}&=\int_0^T \rho_s \inner[Q]{\partial_t \eta }{\partial_t \phi} + \inner{DE(\eta)}{\phi} + \inner{D_2R(\eta,\partial_t \eta)}{\phi} + \nu \inner[\Omega]{\nablasym v}{\nablasym \xi}\
\\ \nonumber
&\quad  + \inner{D_2A(\eta,\partial_t \eta -v \circ\eta)}{\phi-\xi \circ \eta} 
 + \rho_f \inner[\Omega]{v}{\partial_t \xi - v \cdot \nabla \xi} - \inner[\Omega]{f}{\xi} dt,
\end{align}
for all $(\phi,\xi)\in C^\infty([0,T]\times Q)\times C^\infty(0,T;C^\infty_0(\Omega))$, with $\phi(T)=0$, $\phi|_{P}=0$, $\diver \xi = 0$ and $\xi(T)=0$
a weak solution of the FPSI-problem given by \eqref{st-venant} and \eqref{kelvin-voigt} with respective initial and boudnary values.
\end{definition}
The corresponding strong formulation reads

\begin{align}
\label{strong}
 \left\{ \begin{aligned}
          \rho_s \partial_t^2 \eta &= \diver \sigma  &\text{ in } &[0,T] \times Q\\
          \diver \sigma &= DE(\eta) + D_2R(\eta,\partial_t \eta) + a(\nabla \eta) \cdot (\partial_t \eta -v\circ \eta) &\text{ in } &[0,T] \times Q\\
          \diver v &= 0 &\text{ in } &[0,T] \times \Omega\\
          \rho_f(\partial_t v + v\cdot \nabla v) &= \nu \Delta v + F -f - \nabla p& \text{ in } &[0,T] \times \Omega\\
          F&= (\det \nabla (\eta^{-1}))a(\nabla \eta \circ \eta^{-1}) \cdot (v-\partial_t \eta \circ \eta^{-1}) & \text{ in } &[0,T] \times \eta(Q)\\
          F&= 0 &\text{ in } & [0,T] \times\Omega \setminus \eta(Q)\\
          \sigma \cdot n &= 0 &\text{ in } & [0,T] \times\partial Q \setminus P
          \\
          \eta&=\gamma &\text{ in } & [0,T] \times\partial  P
          \\ 
          v&=0 &\text{ in } & [0,T] \times\partial  \Omega
          \\
          v(0)&=v_0 &\text{ in } &  \Omega
          \\
          \eta(0)=\eta_0 &\text{ and }\partial_t\eta(0)=b &\text{ in } &  Q,
         \end{aligned}
 \right.
\end{align}
where $\sigma$ is the Piola-Kirchhoff stress tensor derived from $E$ and $F$ is the Eulerian drag from the solid acting on the fluid. In deriving the strong formulation from the energy contribution, the drag term is used twice (it appears in the PDE for the velocity $v$ as well as the deformation $\eta$) but deriving both constituents from the same term guarantees that these two forces are indeed opposite and equal. Notice also that the boundary value of $\sigma$ is of Neumann type and due to a balance of stresses on the porous medium - fluid interface; in the weak formulation this is already encoded implicitly.

The goal of this paper is to prove the following theorem:

\begin{theorem}[Existence for the porous media problem]
\label{thm:main}
 Given $T> 0$,  $f \in L^2([0,T]\times\Omega;\R^n)$, $\eta_0 \in \mathcal{E}$, $b\in L^2(Q;\R^n)$, $v_0 \in L^2(\Omega;\R^n)$ there exists time $T^* \in (0,T]$ and a weak solution (in the sense of \eqref{eq:weakSol}) to \eqref{strong}. More precisely we show the existence of a pair 
 \[(\eta,v) \in C^0([0,T^*];W^{2,q}(Q;\R^n)) 
 \cap W^{1,2}([0,T^*];W^{1,2}(Q;\R^n))\times W^{1,2}([0,T^*] \times \Omega; \R^n)
 \]
  with initial data $\eta(0) = \eta_0$, $\partial_t \eta_0 = b$, $v(0) = v_0$ (in the weak sense), satisfying \eqref{eq:weakSol}. 
 Additionally this solution satisfies the physical energy inequality. Here $T^*$ either equals $T$ or is the the first time of (self-) contact, where the free part of $\partial \eta(T^*,Q)$ either touches itself or $\partial \Omega$.
\end{theorem}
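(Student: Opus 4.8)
The plan is to obtain the solution through a minimising--movements (De~Giorgi) scheme adapted to the hyperbolic character of the solid equation, following the variational strategy outlined in Section~\ref{sec:2}. Fix $N\in\N$, put $\tau=T/N$, $t^k=k\tau$ and $f^k:=\tfrac1\tau\int_{t^{k-1}}^{t^k}f\,dt$. Given the two previous solid states $\eta^{k-1},\eta^{k-2}\in\mathcal{E}$ (with $\eta|_P=\gamma$) and the previous fluid state $v^{k-1}$, let $(\eta^k,v^k)$ be a minimiser, over $\eta\in\mathcal{E}$ and over $v\in W^{1,2}_0(\Omega;\R^n)$ with $\diver v=0$, of a functional of the type
\[
\tfrac{\rho_s}{2\tau^2}\norm[Q]{\eta-2\eta^{k-1}+\eta^{k-2}}^2+\tfrac{\rho_f}{2\tau}\norm[\Omega]{v-v^{k-1}}^2+E(\eta)+\tau\,\mathcal{D}\Big(\eta,\tfrac{\eta-\eta^{k-1}}{\tau},v\Big)-\tau\inner[\Omega]{f^k}{v},
\]
with $\mathcal{D}$ evaluated at the discrete velocities, and with the Navier--Stokes convective term incorporated via a change of variables along the flow generated by $v$ over one time step, so that $\partial_tv+v\cdot\nabla v$ becomes a plain time derivative in the transported frame and the whole step is a genuine minimisation problem. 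Existence of a minimiser comes from the direct method: $E$ is coercive and weakly lower semicontinuous on $W^{2,q}(Q;\R^n)$ (the $q$-growth term is convex; the Saint~Venant--Kirchhoff and determinant terms are continuous for the strong $C^1$-convergence furnished by $W^{2,q}\hookrightarrow\hookrightarrow C^1$, valid since $q>n$), the Ciarlet--Ne\v{c}as condition and $\det\nabla\eta>0$ pass to the $C^1$-limit \cite{CiaNec87,healeyInjectiveWeakSolutions2009}, and the remaining quadratic terms are weakly lower semicontinuous, with $v\circ\eta$ passing to the limit by the uniform bi-Lipschitz control coming from the lower bound on $\det\nabla\eta$.

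Next I would derive the discrete Euler--Lagrange equations; this is where the infinite-valued, non-convex energy is dealt with. By the theorem of Healey and Kr\"omer \cite{healeyInjectiveWeakSolutions2009}, since $q>n$ and $a>\tfrac{qn}{q-n}$ every finite-energy configuration satisfies $\det\nabla\eta\ge c\big(E(\eta^k)\big)>0$, so the minimiser lies in the interior of the domain of $E$ and genuine two-sided variations $\phi$ with $\phi|_P=0$ are admissible; moreover, as long as $\eta^k(Q)$ neither self-intersects nor touches $\partial\Omega$, a $C^1$ local homeomorphism stays globally injective under small perturbations, so the Ciarlet--Ne\v{c}as constraint is inactive and does not obstruct the variation. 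Testing also with divergence-free $\xi\in W^{1,2}_0(\Omega)$ (which removes the pressure) gives a time-discrete counterpart of \eqref{eq:weakSol}. Using the previous state(s) as competitor, one gets a discrete energy inequality and hence the uniform bounds $\sup_kE(\eta^k)\le C$ (so $\eta_\tau$ is bounded in $L^\infty(0,T;W^{2,q})$ with $\det\nabla\eta_\tau\ge\eps_0>0$), $\sup_k\norm[Q]{\partial_t^\tau\eta^k}^2\le C$, $\sum_k\tau\,R(\eta^k,\partial_t^\tau\eta^k)\le C$ (which with the determinant bound yields $\partial_t\eta_\tau$ bounded in $L^2(0,T;W^{1,2}(Q))$), $\sum_k\tau\norm[\Omega]{\nabla v^k}^2\le C$ and $\sup_k\norm[\Omega]{v^k}^2\le C$ (the $W^{1,2}_0$-bound via Korn's inequality), together with a bound on the drag.

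Passing to interpolants, these bounds let me extract a (non-relabelled) subsequence with $\eta_\tau\to\eta$ in $C^0([0,T^*];C^1(\overline Q))$ (Arzel\`a--Ascoli, using the compact embedding and the time regularity), in particular $\nabla\eta_\tau\to\nabla\eta$ uniformly and $\det\nabla\eta\ge\eps_0$; $\partial_t\eta_\tau\weakto\partial_t\eta$ in $L^2(W^{1,2})$ and weakly-$*$ in $L^\infty(L^2)$; $v_\tau\weakto v$ in $L^2(W^{1,2}_0)$, weakly-$*$ in $L^\infty(L^2)$, and $v_\tau\to v$ strongly in $L^2([0,T^*]\times\Omega)$ by Aubin--Lions (a bound for $\partial_t^\tau v_\tau$ in a negative-order space is read off from the discrete momentum equation, the convective term being controlled via the Ladyzhenskaya interpolation inequality). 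The uniform convergence of $\nabla\eta_\tau$ then passes the limit through $a(\nabla\eta_\tau)$ and through the Saint~Venant--Kirchhoff and determinant parts of $DE$; combining the strong $L^2$-convergence of $v_\tau$ with the uniform convergence of the uniformly bi-Lipschitz $\eta_\tau$ (and a change of variables) gives $v_\tau\circ\eta_\tau\to v\circ\eta$ in $L^2(Q)$, which handles the drag term $D_2A(\eta,\partial_t\eta-v\circ\eta)$ tested against $\phi-\xi\circ\eta$, and likewise $v_\tau\otimes v_\tau\to v\otimes v$ (using $L^\infty(L^2)\cap L^2(W^{1,2})\hookrightarrow L^{10/3}$ when $n=3$).

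Passing to the limit in the discrete weak formulation then yields \eqref{eq:weakSol}; the limit $(\eta,v)$ has the regularity claimed in the statement, the initial values $\eta(0)=\eta_0$, $\partial_t\eta(0)=b$, $v(0)=v_0$ are retained, and the discrete energy inequality passes to the limit by weak lower semicontinuity, giving the physical energy inequality. The only term not covered by the above convergences is the $q$-growth principal part of $DE$, i.e.\ $\int_Q\abs{\nabla^2\eta_\tau}^{q-2}\nabla^2\eta_\tau:\nabla^2\phi\,dx$; this I would treat by a Minty--Browder argument, using the monotonicity of $\xi\mapsto\abs{\xi}^{q-2}\xi$ and the strong convergence of all lower-order contributions, which simultaneously upgrades $\nabla^2\eta_\tau\to\nabla^2\eta$ to strong convergence in $L^q$. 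The construction is run on a maximal interval $[0,T^*]$, $T^*\in(0,T]$: as long as $\eta(t,Q)$ has no self-contact and does not touch $\partial\Omega$ the Ciarlet--Ne\v{c}as condition is inactive and the scheme can be continued, and $T^*$ is the first time this fails (or $T^*=T$). I expect the main obstacle to be the combination of two points: extracting a genuine Euler--Lagrange \emph{equation} rather than an inequality from the non-convex, infinite-valued $E$ (exactly what the Healey--Kr\"omer lower bound on $\det\nabla\eta$ makes possible), and running the minimising-movements scheme with second-order-in-time inertia \emph{and} the Eulerian--Lagrangian coupling through $v\circ\eta$, which is what forces the systematic use of the $C^1$-compactness of the deformations.
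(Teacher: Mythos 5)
Your overall architecture (direct method at each time step, the Healey--Kr\"omer determinant bound to justify two-sided variations, the Ciarlet--Ne\v{c}as constraint being inactive away from contact, Aubin--Lions plus a Minty argument for the limit passage) matches the paper's, but the core of your scheme --- a \emph{single-scale} minimisation whose inertial term is the second-order difference quotient $\frac{\rho_s}{2\tau^2}\norm[Q]{\eta-2\eta^{k-1}+\eta^{k-2}}^2$ --- is precisely the ansatz \eqref{eq:naiveMin} that the paper rules out in the remark ``Minimization in the hyperbolic case'', and the step where you claim ``using the previous state(s) as competitor, one gets a discrete energy inequality'' does not go through. Inserting the competitor $\eta=\eta^{k-1}$ leaves the kinetic term $\frac{\rho_s}{2}\|\tfrac{\eta^{k-1}-\eta^{k-2}}{\tau}\|^2$ on the right-hand side, while the left-hand side only controls the square of the velocity \emph{increment} $\frac{\rho_s}{2}\|\partial^\tau\eta^{k}-\partial^\tau\eta^{k-1}\|^2$; these do not telescope, and summing over $k$ produces $\sum_k\frac{\rho_s}{2}\|\partial^\tau\eta^{k-1}\|^2\sim\tau^{-1}\int\frac{\rho_s}{2}\|\partial_t\eta\|^2\,dt$, which blows up as $\tau\to0$. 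The alternative route --- testing the discrete Euler--Lagrange equation with $\partial^\tau\eta^{k}$ and invoking $\inner{DE(\eta^{k})}{\partial^\tau\eta^{k}}\ge\tfrac{1}{\tau}(E(\eta^{k})-E(\eta^{k-1}))$ --- is the discrete chain rule, which fails here because $E$ is non-convex. So your scheme yields neither $\sup_kE(\eta^{k})\le C$ nor the dissipation bounds, and everything downstream collapses.

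The paper's resolution is a \emph{two-scale} approximation \eqref{eq:twoScales}: first fix a delay parameter $h$ and replace the acceleration by $\frac{\partial_t\eta(t)-\partial_t\eta(t-h)}{h}$, treating the delayed velocity as given data $\zeta$ on each interval of length $h$; the resulting problem is genuinely parabolic, its $\tau$-discretisation carries the inertia only through the convex forcing term $\frac{\tau}{2h}\norm[Q]{\partial^\tau_k\eta-\zeta_k}^2$, and the standard minimising-movements comparison with the ``standing still'' competitor then does close. The hyperbolic a priori estimate is recovered only at the time-continuous level, after $\tau\to0$, by testing the time-delayed equation with $\partial_t\eta$ and using $\inner{a-b}{a}\ge\frac12\abs{a}^2-\frac12\abs{b}^2$; only afterwards is $h\to0$ taken. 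Two further ingredients you omit are also load-bearing: the $h$- and $h^{a_0}$-weighted $W^{k_0,2}$-regularisations of $\eta$, $\partial_t\eta$ and $v$, which are what make $(\partial_t\eta,v)$ an admissible test pair for the energy identity and, via the resulting Lipschitz bound on $v$, make the Lagrangian flow map $\Phi$ (used to discretise the material derivative of the fluid) well defined --- with only $v\in L^2(W^{1,2})$ your ``change of variables along the flow generated by $v$'' is not available.
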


Theorem \ref{thm:main} is proved in Section \ref{sec:4}. However, as the method of proof has the potential to be extended to more general models (see below) we first present, for a better understanding of the techniques, a respective overview in Section \ref{sec:2}.

\subsection*{Limitations and generalizations}

The model considered in this paper for which we prove existence of weak solutions in Theorem \ref{thm:main} includes many important features that have not been considered in mathematical FPSI yet: namely, the model is \emph{fully dynamic including inertia} and the response of the solid is \emph{fully nonlinear allowing for large structural deformations}. On the other hand, we have made several simplifying assumptions including the following ones:

\begin{enumerate} 
    \item  {\bf Non-saturated/non-zero volume porous media:} In the model at hand, the volume fraction of the fluid inside the porous medium is always fixed to one. This greatly simplifies the discussion as it in particular implies $\diver v = 0$ everywhere, even over the interface. It is however possible to generalize this condition and assume divergence free flow only on a part of the domain. In particular in \cite{BenKamSch20} we already dealt with a global velocity field that is only divergence free outside the solid.
    \item {\bf Dependence of the solid stored energy on fluid variables:} In the model we assume that the stored energy of the solid does not depend on the properties of the saturating fluid. Generalizations in this directions are known in modeling and include e.g.\ Biot's model where the stored energy depends on the pressure of the fluid in pores or models that depend on the volume fraction of the fluid and solid in each point. 
    \item {\bf Compressibility of the fluid:} The fluid is assumed incompressible and Newtonian in the present model. Nonetheless, in view of possible applications like the breathing through masks, incorporating compressibility seems an important task. This will call for changes in the analysis, but possibly also modelling, in particular if the pressure additionally enters the solid stored energy like in Biot's model. (See \cite{BreKamSch21} for an example of the variational strategy in the context of fluid-structure interaction with a compressible fluid).
    \item {\bf Drag force:} We assume the most basic form of the drag force that depends linearly on the difference between the fluid and solid velocity. It is also just the drag force where the coupling between the fluid and solid velocities happens in the porous medium model. One could think of a drag force that depends also on other variables included in the model (like volume fractions) or other coupling possibilities. 
     \item {\bf Interface conditions and boundary layers:} The interface condition at the transition between the porous medium and the circumlying fluid currently consists of continuity of the fluid velocity and a stress free boundary conditions for the solid. Generalizations are thinkable for instance an additional boundary layer described by an additional dissipation functional.
     \item {\bf Geometry of the porous medium:} In many cases the porous medium can be well described as a lower-dimensional structure, thus an appropriate membrane, shell or plate model is applicable. Applications include cell walls or the aforementioned breathing through masks. The variational approach presented here is well suited to handle also these situations; for the classical FSI this situation is currently investigated~\cite{KamSchSpe20} so that generalizations to the FPSI setting seem feasible.
     
\end{enumerate}

It seems plausible that {the discussed aspects (or other)} could be incorporated into the presented model in a future work. Let us however remark that some limitations of the given model seem not to be {removable} with the current state of art. Most significantly for the solid model. In particular it is essential to include regularizations featuring the second gradient (see Remark \ref{non-simple}).
 
 \section{Variational strategy - Overview of the proof of Theorem \ref{thm:main}}
 \label{sec:2}

We are interested in proving existence of weak solutions to the system \eqref{strong} in the sense of Definition \ref{def:weak}. As is standard, we will first prove existence of approximations of \eqref{strong} and then pass to the limit. To enable the limit passage a crucial prerequisite is the availability of suitable a-priori estimates, which are usually based on an appropriate energy (in)equality. Indeed, multiplying formally the balances of the respective momenta in \eqref{strong} by the velocity and integrating over $[0,T]$ leads to
\begin{align*}
 &\underbrace{E(\eta(T)) + \frac{\rho_s}{2}\norm[Q]{\partial_t \eta(T)}^2 + \frac{\rho_f}{2}\norm[\Omega]{v(T)}^2}_\text{energy at final time $T$} + \underbrace{\int_0^T \left[ \nu \norm[\Omega]{\nablasym v}^2 + 2R(\eta,\partial_t \eta) + 2A(\eta,\partial_t \eta - v \circ \eta) \right] dt}_\text{dissipated energy} \\
 &= \underbrace{E(\eta_0) + \frac{\rho_s}{2}\norm[Q]{\partial_t \eta(0)}^2 + \frac{\rho_f}{2}\norm[\Omega]{v_0}^2}_\text{energy at the initial time $0$} + \underbrace{\int_0^T \inner[\Omega]{f}{v} dt}_\text{power of the external forces},
\end{align*}
which, on a more abstract level, takes the compact form
 \begin{align*}
  E_{\text{st}}(T)+E_{\text{kin}}(T) + \int_0^T W_{\text{diss}}(t) dt = E_{\text{st}}(0) + E_{\text{kin}}(0) + \int_0^T W_{\text{ext}}(t)dt
 \end{align*}
 where we distinguished the the following four contributions: The stored energy $  E_{\text{st}}$, the kinetic energy $E_{\text{kin}}$, energy lost through dissipation $ W_{\text{diss}}$ and work done by external forces $W_{\text{ext}}$. The kinetic energy and the external forces will generally each always have a similar form, independent of the considered material. On the other hand, the other two carry the constitutive information on material modelling. In this work, we stick to the form \eqref{st-venant} and \eqref{kelvin-voigt} of these contributions, but the method of proof is to a large extent independent of those, so we shall not use the specific form within this section.
 
 We will prove Theorem \ref{thm:main} by \emph{time-discretization}; this is particularly suited as the geometry between the porous medium and the circumlying fluid changes during the evolution. Thus, even if we do not deal with the situation here, a time-discretization method has the potential to handle complex transition conditions between the porous medium and the fluid. More importantly, however, a time-discretization allows us to set up a \emph{variational problem} to prove \emph{existence of approximate solutions}. This is of great importance as standardly used fixed point methods do not function well if the included operators are not monotone (in some sense) and/or the set of admissible functions is not convex. 
 
 Nonetheless, the selected time-discretization method needs to be chosen carefully as the balance of momentum for the solid porous medium (i.e.\ first equation in \eqref{strong}) is \emph{hyperbolic featuring a non-monotone differential operator stemming from the non-convex stored energy}. Let us explain the difficulty on a simple toy model involving a single unit mass particle with position $x(t)\in \R^n$ and a \emph{non-convex} potential energy $E(x(t))$. Hence, we seek the solution to the following hyperbolic ODE: 
 $$\partial_t^2 x = -\nabla E(x)$$ 
 with initial data $x(0)=x_0$ and $\partial_tx(0)=x^*$. The naive ansatz is to consider a time-discretization with step-size $\tau$ and simply set up a backward Euler discretization as follows:
 \begin{align} \label{eq:naiveDiscr}
  0 = \nabla E(x_{k+1}) + \frac{\tfrac{x_{k+1}-x_k}{\tau}-\tfrac{x_k-x_{k-1}}{\tau}}{\tau}.
 \end{align}
 Assuming that existence of solutions to \eqref{eq:naiveDiscr} can be proved, one would like to derive an energy inequality from \eqref{eq:naiveDiscr} to obtain a-priori estimates. Mimicking the continuous case, the most straighforward way to do so seems to test \eqref{eq:naiveDiscr}  with the discretized time derivative. This yields
 $$
 0=\left\|\tfrac{x_{k+1}-x_k}{\tau}\right\|^2 - \inner{\tfrac{x_{k}-x_{k-1}}{\tau}}{\tfrac{x_{k+1}-x_k}{\tau}} + \inner{\nabla E(x_{k+1})}{\tfrac{x_{k+1}-x_k}{\tau}},
 $$
 where the second term can be estimated using Young's inequality. Thus, we need to concentrate on the last term; in the time-continuous case we would apply a chain rule here. However in the discrete setting a chain rule is generally not available, only if $E$ would be convex the following inequality 
 $$
 \inner{\nabla E(x_{k+1})}{\tfrac{x_{k+1}-x_k}{\tau}} \geq E(x_{k+1}) - E(x_{k}),
 $$
 that is sometimes called the \emph{discrete chain rule} could be applied. In the \emph{non-convex setting} this inequality is false in general and this path to a-priori estimates seems closed.\footnote{In some cases, e.g.\ in the case of $\lambda$-convexity, it is still possible to derive a variant of the discrete chain rule with some additional error terms on the right hand side. This can lead the way to an a-priori estimate and a resulting existence proof but is much more problem specific than the general method we present here.}
 
 The solution to this quandry has been found in \cite{BenKamSch20} by noting that rather a \emph{two-scale} approximation is needed in  \eqref{eq:naiveDiscr}. Thus we consider two parameters $\tau,h$ and write 
  \begin{equation}
  \label{eq:twoScales}
  0 = \nabla E(x_{k+1}) + \frac{\tfrac{x_{k+1}-x_k}{\tau}-\tfrac{x_k-x_{k-1}}{\tau}}{h}.
\end{equation}
 If $h$ is fixed and $t< h$, \eqref{eq:twoScales} can be viewed as a $\tau$-time discretization of the following gradient flow problem under forcing
 \begin{align}
\label{eq:approx}
 \nabla E(x(t)) = - \frac{\partial_t x(t)- x^*}{h}, \quad x(0)=x_0,
\end{align}
where $x^* = \partial_t x(0)$. Indeed, if we could pass to the limit $\tau \to 0$ in \eqref{eq:twoScales} we construct a solution $x^h$ to \eqref{eq:approx} on $[0,h]$. Now using the known values of $\partial_t x(t-h)$ in place of $x^*$ for the next interval of length $h$ we can iteratively prolong this process to get what we will call a \emph{time-delayed solution}, satisfying
\begin{align}
\label{eq:approx1}
 \nabla E(x(t)) = - \frac{\partial_t x(t)- \partial_t x(t-h)}{h}
\end{align}
for $t \in [0,T]$. 
We can test this time-delayed solution with $\partial_t x(t)$ and find the {\em hyperbolic a-priori estimate} 
\begin{align*}
 E(x(b)) - E(x(a)) &= - \int_a^b\inner{\frac{\partial_t x(t)-\partial_t x(t-h)}{h}}{\partial_t x(t)}\, dt 
 \\
 &\quad \leq -\frac{1}{2} \fint_{b-h}^{b} \abs{\partial_t x(t)}^2 dt +\frac{1}{2} \fint_{a-h}^a \abs{\partial_t x(t)}^2 dt,
\end{align*}
whenever the solution was constructed over $[a-h,b]$.\footnote{Actually in the construction procedure the hyperbolic a-priori estimate already needs to be used for each interval of length $h$ to guarantee the admissibility of the initial/right hand data of the next.} 

The above gives us a good estimate on $E(x(t))$ and an averaged time-derivative,
independent of $h$ which allows for sending $h \to 0$ in~\eqref{eq:approx1}. Thus this idea paves the path to Theorem \ref{thm:main}.
 
On the level of the $\tau$-discretization we deal with the gradient flow problem \eqref{eq:approx1}. For these variational methods are well established. We will explain this in the following. Let us consider the rescaled problem with $h=1$ so that we obtain
 $$
 \nabla E(x(t)) = - \partial_t x(t)-f, \quad x(0)=x_0,
 $$
 with some given time dependent force $f$, which includes the previously constructed $\partial_t x(t-h)=\partial_t x(t-1)$. We may discretize this by means of the backward Euler method to get  
\begin{align}
\label{eq:parab:disc}
     0= \nabla E(x_{k+1}) + \frac{x_{k+1}-x_k}{\tau} + f_k.
\end{align}
where $f_k$ is a corresponding time discretization of $f$.
 Now one realizes that \eqref{eq:parab:disc} is actually the Euler-Lagrange equation to the minimization problem 
 $$
 \mathcal{F}_k(x) \longrightarrow \text{min},
 $$
 where $\mathcal{F}_k: x\mapsto E(x) + \frac{1}{2\tau}\abs{x-x_k}^2+ (x-x_k) \cdot f_k$. Thus, one constructs $x_{k+1}$ using the minimization problem. Now, since one specifically has a minimizer, instead of using the equation one can compare the values of $F_k(x)$ at $x_{k+1}$ and $x_k$ in order to derive a-priori estimates. This leads to
 \begin{align*}
  E(x_{k+1}) + \frac{\tau}{2}\abs{\frac{x_{k+1}-x_k}{\tau}}^2 + \tau\frac{x_{k+1}-x_k}{\tau} f= \mathcal{F}_k(x_{k+1}) \leq \mathcal{F}_k(x_k) = E(x_k),
 \end{align*}
 and summing up yields the energy inequality (up to a factor on the dissipation) and, in turn, a-priori estimates. This method, when one resorts to minimization, has been known as the \emph{minimizing movements approximation} of gradient flows and is due to De Giorgi \cite{DeGiorgi}. Since then it has been widely used in mathematical solid mechanics since it allows to cope with the non-convexities of $E$ (see e.g. \cite{KruRou19}) and is well suitable also in our case. 

\begin{remark}[Minimization in the hyperbolic case]
A more direct attempt at a proof of the main theorem would be to generalize the minimizing movements by minimizing the functional
 \begin{align} \label{eq:naiveMin}
 \mathcal{F}_k(x) := E(x) + \frac{1}{2} \abs{\tfrac{x-x_k}{\tau}-\tfrac{x_k-x_{k-1}}{\tau}}^2,
 \end{align}
 which is easily checked to have \eqref{eq:naiveDiscr} as Euler-Lagrange equation. But then the same arguments apply to establishing estimates via the equation and comparing the values of $F_k(x)$ at $x_{k+1}$ and $x_k$ instead yields
 \begin{align*}
  E(x_{k+1}) + \frac{\tau^2}{2}\abs{\frac{\tfrac{x_{k+1}-x_k}{\tau}-\tfrac{x_k-x_{k-1}}{\tau}}{\tau}}^2 = \mathcal{F}_k(x_{k+1}) \leq \mathcal{F}_k(x_k) = E(x_k) + \frac{1}{2} \abs{\tfrac{x_k-x_{k-1}}{\tau}}^2
 \end{align*}
 with a term on the right hand side that turns out to have entirely the wrong scaling to estimate.\footnote{This is not surprising, as we are comparing a proper approximately inertial solution with one that suddenly stops. A better competitor might be the ``straight continuation'' $x_k+ \tau(x_k-x_{k+1})$, but then the estimate again requires convexity to deal with the energy-term.} Thus, the energy estimate cannot be obtained.
 \end{remark}

The approach explained here turns out to be admissible for infinite-dimensional spaces instead of $\R^n$ and even coupling between Eulerian and Lagrangian coordinates. For that Eulerian-Lagrangian coupling however additional difficulties appear that require some novel ideas on its own. This will be discussed in the forthcoming sections.

\begin{remark}[Numerical use of the method]
\label{rem:num}
Since numerous numerical schemes for minimization (over discrete spaces) are available the above methodology might also be attractive for computational mathematics. The idea would here be to do a two-scale approximation:
This means that once $x_{k+1}^{\ell-1},x_{k}^{\ell-1}$ and $x^\ell_{k}$ are constructed. Then we can define $x_{k+1}^{\ell}$ as the minimizer of
 \begin{align*}
  \mathcal{F}_k^{\ell}(x) := E(x) + \frac{\tau}{2h} \abs{\frac{x-x_k^{\ell}}{\tau} - \frac{x_{k+1}^{\ell-1}-x_k^{\ell-1}}{\tau}  }^2.
 \end{align*}
 In order to pass to the limit it is in general unavoidable to use the hyperbolic structure on a time-continuous level.  This means that first $\tau\to 0$ and only afterwards $h\to 0$. The question is {\em how much smaller} does $\tau$ needs to be? One observes quickly, that in case $E$ is convex $\tau$ and $h$ can be chosen arbitrarily. Hence, the smallness of $\tau$ in relation should depend on the {\em non-convexity} of the assumed energies. In a forthcoming paper we hope to investigate this issue further.
\end{remark}

\section{Proof of Theorem \ref{thm:main}}
\label{sec:4}
\noindent
In this section we prove Theorem \ref{thm:main}, i.e. we prove existence of weak solutions of the FPSI-problem \eqref{strong}. We follow the general strategy as outlined in Section \ref{sec:2} and as devised in \cite{BenKamSch20}. First we show existence for solutions to a time-delayed, parabolic problem and then we use these solutions to construct the weak solution to the actual problem. These two parts will be the topic of the next two subsections. 

 \subsection{Existence for the time-delayed problem}
 
 The key to finding a solution to the time-delayed problem is to understand that on short time-scales, the time-delayed problem is ultimatively parabolic. Instead of treating the delayed velocities $\partial_t \eta(t-h)$ and $v(t-h)$ as non-local (in time) parts of the equation, we can treat them as a fixed given data by considering times $t<h$ only. This mindset allows us to state and prove the following Proposition \ref{prop:timeDelayed} during this subsection.  
 
 Additionally, in this subsection  and in Proposition \ref{prop:timeDelayed}, we need to introduce regularizations of the involved stored energy and dissipation function depending on the following parameters: We take $k_0 \in \N$ in such a way that $W^{k_0,2}(\Omega;\R^n)$ embeds into $C^1(\Omega;\R^n)\cap W^{2,q}(Q;\R^n)$. Further we choose $a_0 \in(0,1)$ appropriately to be fixed later.
 
 \begin{proposition}[Existence for short time time-delayed solutions] \label{prop:timeDelayed}
  Given $h > 0$ sufficiently small,  $f,w \in L^2([0,h]\times\Omega;\R^n)$, $\zeta \in L^2([0,h]\times Q;\R^n)$, $\eta_0 \in \mathcal{E}$ there exist 
  \[
  \eta \in C^0([0,h];\overline{\mathcal{E}}) \cap W^{1,2}([0,h];W^{k_0,2}(Q;\R^n))\text{  with }\eta(t)|_P = \gamma
  \]
  and $v \in W_0^{k_0,2}([0,h] \times \Omega;\R^n)$ with $\diver v = 0$ such that
  \begin{align*}
   &\int_0^h \langle DE(\eta), \phi \rangle + h^{a_0} \inner[Q]{\nabla^{k_0} \eta}{\nabla^{k_0} \phi}+ \langle D_2R(\eta,\partial_t \eta), \phi \rangle +h \inner[Q]{\nabla^{k_0} \partial_t \eta}{\nabla^{k_0} \phi} \\
   &+ \langle D_2A(\eta,\partial_t \eta -v \circ\eta),\phi-\xi \circ \eta\rangle + \nu \inner[\Omega]{\nablasym v}{\nablasym \xi} + h \inner[\Omega]{\nabla^{k_0} v}{\nabla^{k_0} \xi} \\
   &+\rho_s\inner[Q]{\tfrac{\partial_t \eta - \zeta}{h}}{\phi} + \rho_f\inner[\Omega]{\tfrac{v \circ \Phi - w}{h}}{\xi \circ \Phi} - \inner[\Omega]{f}{\xi} dt = 0
  \end{align*}
   for all $\phi \in L^2([0,h];W^{2,q}(Q;\R^n))$ with $\phi_P =0$ and $\xi \in L^2([0,h];W^{1,2}_0(\Omega;\R^n))$ with $\diver \xi = 0$. Here $\Phi:[0,h]\times \Omega \to \Omega$ is the flow map of $v$, i.e.\ a family of volume preserving diffeomorphisms $\Phi(t,.):\Omega\to \Omega$ such that $\Phi(0,.) = \textrm{id}$ and $\partial_t \Phi(t,y) = v(t,\Phi(t,y))$.
 \end{proposition}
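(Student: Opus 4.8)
The plan is to prove Proposition~\ref{prop:timeDelayed} by running the minimizing-movements scheme of Section~\ref{sec:2} in the \emph{inner} time step $\tau$: with the delayed data $\zeta,w$ frozen on $[0,h]$ the system is a (state-dependent, degenerate) parabolic gradient flow for the pair $(\eta,v)$, the factors $1/h$ playing the role of the relaxation rate. I would fix $\tau=h/N$, set $(\eta_0,v_0)$ to the initial data, and, given $(\eta_k,v_k)$ and the piecewise-in-time flow map $\Phi_k$ of $v_1,\dots,v_k$, define $(\eta_{k+1},v_{k+1})$ as a minimizer over $\eta\in\overline{\mathcal{E}}$ with $\eta|_P=\gamma$ and over divergence-free $v\in W_0^{k_0,2}(\Omega;\R^n)$ of
\begin{align*}
 \mathcal{F}_{k+1}(\eta,v)
 &:= E(\eta) + \tfrac{h^{a_0}}{2}\norm[Q]{\nabla^{k_0}\eta}^2 + \tfrac{h}{2\tau}\norm[Q]{\nabla^{k_0}(\eta-\eta_k)}^2 + \tfrac{\rho_s\tau}{2h}\norm[Q]{\tfrac{\eta-\eta_k}{\tau}-\zeta_k}^2 - \tau\inner[\Omega]{f_k}{v} \\
 &\quad + \tau\Big[R\big(\eta_k,\tfrac{\eta-\eta_k}{\tau}\big) + A\big(\eta_k,\tfrac{\eta-\eta_k}{\tau}-v\circ\eta_k\big) + \tfrac{\nu}{2}\norm[\Omega]{\nablasym v}^2 + \tfrac{h}{2}\norm[\Omega]{\nabla^{k_0}v}^2\Big] + \tfrac{\rho_f\tau}{2h}\norm[\Omega]{v\circ\Phi_k-w_k}^2 ,
\end{align*}
with $\zeta_k,w_k,f_k$ the time-averages of $\zeta,w,f$ on $[t_k,t_{k+1}]$. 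The essential bookkeeping choice is the \emph{semi-implicit} convention: freezing the state $\eta_k$ inside the dissipation potentials $R,A$ and using the previous flow map $\Phi_k$ in the fluid inertia keeps $\mathcal{F}_{k+1}$ the sum of the genuinely non-convex $E(\eta)+\frac{h^{a_0}}{2}\norm[Q]{\nabla^{k_0}\eta}^2$ and of a functional convex in $(\eta,v)$, and after $\tau\to0$ it reproduces exactly the coupled terms of the Proposition. One then iterates for $k=0,\dots,N-1$ and passes $\tau\to0$.

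Existence of a minimizer at each step is the direct method. Coercivity comes from $E(\eta)$ (which bounds $\nabla^2\eta$ in $L^q$ and, by \cite{healeyInjectiveWeakSolutions2009}, keeps $\det\nabla\eta$ uniformly away from zero), from the $h^{a_0}$-term (which, with the boundary datum on $P$, bounds $\eta$ in $W^{k_0,2}(Q)$), and from $\nu\norm[\Omega]{\nablasym v}^2$ and the $h$-term for $v$; the remaining terms are nonnegative up to linear pieces absorbed by Young's inequality. Along a minimizing sequence the $W^{k_0,2}$-bound forces strong convergence in $C^1$ and in $W^{2,q}$ (and of $v$ in $C^1$), and the flow map depends $C^1$-continuously on $v$; hence every term except the convex top-order ones is in fact \emph{continuous}, lower semicontinuity is immediate, and $\overline{\mathcal{E}}$ is closed under this convergence. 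To obtain the Euler--Lagrange system one varies $\eta\mapsto\eta+s\phi$ with $\phi|_P=0$ and $v\mapsto v+s\xi$ with $\diver\xi=0$ (the pressure, multiplier of the divergence constraint, dropping out). This is precisely where the second-gradient term in \eqref{st-venant} is indispensable, in two ways: the Healey--Kr\"omer bound $\det\nabla\eta_{k+1}\ge c>0$ makes $\nabla(\eta_{k+1}+s\phi)$ admissible for small $s$; and, for $h$ small, the a priori estimate confines $\eta_{k+1}$ to an $O(\sqrt h)$ $C^1$-neighbourhood of $\eta_0$, hence to strictly injective maps, so $\eta_{k+1}+s\phi$ is still injective and therefore automatically satisfies the Ciarlet--Ne\v cas equality — i.e.\ the constraint $\overline{\mathcal{E}}$ is inactive and two-sided variations are legitimate. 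The resulting identity is the $\tau$-discretisation of the Proposition's equation.

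For the a priori estimates I would test minimality against the competitor $(\eta_k,0)$, expand the quadratic penalties (using $\norm[\Omega]{v\circ\Phi_k}=\norm[\Omega]{v}$ since $\Phi_k$ is measure preserving), absorb cross terms by Young's and Korn's inequalities, and sum over $k$: this bounds $E(\eta_N)+\frac{h^{a_0}}{2}\norm[Q]{\nabla^{k_0}\eta_N}^2$ together with the accumulated dissipation and the discrete inertia $\sum_k\frac{\rho_s\tau}{h}\norm[Q]{\tfrac{\eta_{k+1}-\eta_k}{\tau}}^2+\sum_k\frac{\rho_f\tau}{h}\norm[\Omega]{v_{k+1}}^2$ by $E(\eta_0)+\frac{h^{a_0}}{2}\norm[Q]{\nabla^{k_0}\eta_0}^2+C_h(\norm[L^2]{\zeta}^2+\norm[L^2]{w}^2+\norm[L^2]{f}^2)$. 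Thus, uniformly in $\tau$, the interpolants $\eta_\tau$ are bounded in $L^\infty([0,h];W^{k_0,2}(Q))\cap W^{1,2}([0,h];W^{k_0,2}(Q))$ (with values in $\overline{\mathcal{E}}$), $v_\tau$ in $L^2([0,h];W^{k_0,2}(\Omega))$, and the flow maps $\Phi_\tau$ are equi-bi-Lipschitz in space and equi-H\"older in time. By Aubin--Lions, $\eta_\tau\to\eta$ strongly in $C^0([0,h];W^{2,q}(Q))\cap C^0([0,h];C^1(Q))$, $\partial_t\eta_\tau\weakto\partial_t\eta$ in $L^2([0,h];W^{k_0,2})$, $v_\tau\weakto v$ in $L^2([0,h];W^{k_0,2})$, and by equicontinuity $\Phi_\tau\to\Phi$ uniformly with $\Phi$ the measure-preserving flow of $v$. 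The strong $C^1$-convergence of $\eta_\tau$ handles all the nonlinear coefficients: $DE(\eta_\tau)\to DE(\eta)$ (from the strong $W^{2,q}$-convergence), $a(\nabla\eta_\tau)\to a(\nabla\eta)$, $\xi\circ\eta_\tau\to\xi\circ\eta$, $\xi\circ\Phi_\tau\to\xi\circ\Phi$; the compositions $v_\tau\circ\eta_\tau$ and $v_\tau\circ\Phi_\tau$ converge \emph{weakly} in $L^2([0,h];L^2)$ (comparing with the fixed compositions $v_\tau\circ\eta$, $v_\tau\circ\Phi$, which converge weakly by linearity, the difference being $O(\norm[\infty]{Dv_\tau}\,\norm[C^0]{\eta_\tau-\eta})$ in $L^2$). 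The remaining terms are linear and pass by weak convergence; the inertial products are weak$\times$strong. Hence $(\eta,v)$ satisfies the weak identity of the Proposition, with the asserted regularity.

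The genuinely delicate point — and the part I expect to be the main obstacle — is the Eulerian--Lagrangian coupling via $v\circ\eta$ and the flow map $\Phi[v]$. At the discrete level the scheme must be organised (here, through the semi-implicit freezing of $\eta_k$ and $\Phi_k$) so that each minimization step stays well posed and convex in the right variables; and in the limit one must still extract enough compactness for the compositions even though $v$ carries only $L^2$-in-time regularity — its inertia being hidden inside $\Phi$, so that there is no direct bound on $\partial_t v_\tau$. The argument therefore hinges on the strong $C^1$-compactness of $\eta_\tau$, on the equi-Lipschitz and equi-continuity of $\Phi_\tau$, and on the identity $\norm[\Omega]{v\circ\Phi}=\norm[\Omega]{v}$ that lets the fluid energy estimate close; getting these pieces to interlock — essentially the reason the $W^{k_0,2}$-regularizations are built into the approximation at all — is where the real work lies.
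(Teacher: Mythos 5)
Your proposal is correct and follows essentially the same route as the paper: the same two-scale minimizing-movements functional with the semi-implicit freezing of $\eta_k$ and $\Phi_k$, the direct method plus the Healey--Kr\"omer/contact-free argument to justify two-sided variations, the competitor $(\eta_k,0)$ for the discrete energy estimate, the composed flow maps with their equi-Lipschitz/measure-preserving bounds, and the same compactness-based limit passage $\tau\to0$. The only (immaterial) differences are presentational, e.g.\ invoking Aubin--Lions for $\eta_\tau$ where the paper relies directly on the uniform $W^{k_0,2}$ bounds and compact embeddings.
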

In accordance with the regularizing parameters, we define 
 \begin{align*}
  E_h(\eta) := E(\eta) + \frac{h^{a_0}}{2} \norm[Q]{\nabla^{k_0} \eta}^2 \text{ and }   R_h(\eta,b) := R(\eta,b) + \frac{h}{2} \norm[Q]{\nabla^{k_0} b}^2
 \end{align*}
 as a shorthand and note that
 \begin{align*}
  \inner{DE_h(\eta)}{ \phi } = \inner{DE(\eta)}{ \phi } + h^{a_0} \inner[Q]{\nabla^{k_0} \eta}{\nabla^{k_0} \phi}
 \end{align*}
 and similarly for $\inner{D_2R_h(\eta,\partial_t \eta)}{ \phi }$. These regularizations will help us at several points, both with establishing the energy inequality as well as the flow map of the fluid.

 As explained previously, the key to deal with such problems in a manner consistent with our energy-considerations, is De Giorgi's method of minimizing movements \cite{DeGiorgi}. We thus start the proof by minimizing the functional
 \begin{align*}
  \mathcal{F}_k^{(\tau)}(\eta,v) &:=  E_h(\eta) + \tau \left[ R_h(\eta_k^{(\tau)},\partial_k^\tau \eta) +  A(\eta_k^{(\tau)},\partial_k^\tau \eta - v \circ \eta_k^{(\tau)}) + \frac{\nu}{2} \norm[\Omega]{\nablasym v}^2 +\frac{h}{2} \norm[\Omega]{\nabla^{k_0} v}^2\right]\\ &+ \frac{\tau}{2h} \left[\rho_s \norm[Q]{\partial_k^\tau \eta - \zeta_k^{(\tau)}}^2 + \rho_f\norm[\Omega]{v \circ \Phi_k^{(\tau)} - w_k^{(\tau)}}^2\right] - \tau \inner[\Omega]{f_k^{(\tau)}}{v}
 \end{align*}
 in the class of all admissible $\eta,v$, i.e.\ those such that $\eta \in \mathcal{E}_h = \mathcal{E} \cap W^{k_0,2}(Q; \R^n)$ and $v \in W_{0,\mathrm{div}}^{1,2}(\Omega;\R^n)$, i.e. $v \in W_{0}^{1,2}(\Omega;\R^n)$ with $\diver v = 0$. Here $\partial_k^\tau \eta := \smash{\tfrac{\eta-\eta_k^{(\tau)}}{\tau}}$ is a discrete derivative and $\smash{f_k^{(\tau)}} := \smash{\fint_{k\tau}^{(k+1)\tau} f dt}$ is a time-discretization of $f$. In the same way we define $\smash{\zeta_k^{(\tau)}}$ and $\smash{w_k^{(\tau)}}$. For any given $\eta_k^{(\tau)},\Phi_k^{(\tau)}$, this minimizer will yield the next step, i.e.\ the pair $(\eta_{k+1}^{(\tau)},v_{k+1}^{(\tau)})$. From this we then also construct $\Phi_{k+1}^{(\tau)}:= (\textrm{id} + \tau v_{k+1}^{(\tau)}) \circ \Phi_k^{(\tau)}$.\footnote{Note that, as expected for a parabolic problem, the rate variables, i.e.\ the velocities $\partial_k^\tau \eta^{(\tau)}_k$ and $v_k^{(\tau)}$ are almost entirely discarded. Only the latter occurs indirectly in $\Phi_k^{(\tau)}$. Similarly we are not using the initial data for these velocities. Only later will they all reappear through proper choice of $\zeta$ and $w$. This again emphasizes that these rate variables are associated with the $h$-scale which is kept constant for the time-delayed problem.}

 The existence of a minimizer to $\mathcal{F}_k^{(\tau)}$ follows by the direct method from the calculus of variations. Indeed, by the coercivity of $\smash{\mathcal{F}_k^{(\tau)}}$, we find a bounded sequence $(\eta_k^l, v_k^l) \in \mathcal{E}_h\times \smash{W^{1,2}_{0,\mathrm{div}}(\Omega;\R^n)}$, such that
 $$
 \mathcal{F}_k^{(\tau)}(\eta_k^l, v_k^l) \to \mathrm{inf}_{(\nu,\eta) \in \mathcal{E}_h\times W^{1,2}_{0,\mathrm{div}}(\Omega;\R^n)}  \mathcal{F}_k^{(\tau)}(\eta, v)
 $$
 Here, the boundedness of sequence is particularly due to the added regularizing terms, though existence of minimizers could be shown even if they were omitted, as well as the highest order term in the elastic stored energy.
 
 By the Banach-Alaoglu theorem, we conclude the existence of a pair $(\eta_k^\tau, v_k^\tau) \in W^{k_0,2}(Q; \R^n) \cap W^{2,q}(Q; \R^n) \times W^{1,2}(\Omega,\R^n)$ such that
 $$
 (\eta_k^l, v_k^l) \rightharpoonup (\eta_k^\tau, v_k^\tau) \in W^{k_0,2}(Q; \R^n)W^{2,q}(Q; \R^n) \times W^{1,2}(\Omega,\R^n).
 $$
 By compact embeddings, owing to the regularizing terms, we can show that $\eta_k^\tau$ fulfills the Ciarlet-Ne\v{c}as condition and thus lies in $\mathcal{E}_h$ \footnote{This can be show even under a weaker type of convergence; see the original paper \cite{CiaNec87}.}, due to linearity $v_k^\tau$ is divergence free and satisfies the zero boundary condition. 
 
 It remains to show that $\mathcal{F}_k^{\tau}$ is weakly lower semicontinuous with respect to the weak convergence above. This is nevertheless standard, due to the convexity of the highest order terms, the strong convergence of $\eta_k^l \to \eta_k^\tau$ in $C^1(Q;\R^n)$. Thus, $(\eta_k^\tau, v_k^\tau)$ is the sought minimizer.
 
 The reason we prefer a minimization to other approaches is that it allows us to immediately access a discrete energy inequality, without having to rely on additional properties of the energy, such as convexity.

 For this we compare the minimal value, i.e.\ that of $(\eta_{k+1}^{(\tau)},v_{k+1}^{(\tau)})$ to that of ``standing still'', i.e.\ $(\eta_k^{(\tau)},0)$ in the same functional. This removes all dissipative terms on one side of the inequality and gives us
 \begin{align*}
 &\phantom{{}={}}  E_h(\eta_{k+1}^{(\tau)}) + \tau \left[ R_h(\eta_k^{(\tau)},\partial_k^\tau \eta_{k+1}^{(\tau)}) + A(\eta_k^{(\tau)},\partial_k^\tau \eta_{k+1}^{(\tau)} - v_{k+1}^{(\tau)} \circ \eta_k^{(\tau)}) + \frac{\nu}{2} \norm[\Omega]{\nablasym v_{k+1}^{(\tau)}}^2 +\frac{h}{2} \norm[\Omega]{\nabla^{k_0} v^{(\tau)}_{k+1}}^2\right]\\ &+ \frac{\tau}{2h} \left[\rho_s\norm[Q]{\partial^\tau_k \eta_{k+1}^{(\tau)} - \zeta_k^{(\tau)}}^2 + \rho_f\norm[\Omega]{v_{k+1}^{(\tau)} \circ \Phi_k^{(\tau)} - w_k^{(\tau)}}^2\right] - \tau \inner[\Omega]{f_k^{(\tau)}}{v_{k+1}^{(\tau)}} \\
  &\leq E_h(\eta_k^{(\tau)}) + \frac{\tau}{2h} \left[\rho_s\norm[Q]{\zeta_k^{(\tau)}}^2 + \rho_f\norm[\Omega]{ w_k^{(\tau)}}^2\right]
 \end{align*}
 A telescope argument and the usual weighted Young's inequality then gives us an energy estimate as well.
 \begin{align*}
 &  E_h(\eta_N^{(\tau)}) + \! \sum_{k=0}^{N-1} \tau \left[ R_h(\eta_k^{(\tau)},\partial^\tau_k \eta_{k+1}^{(\tau)}) + A(\eta_k^{(\tau)},\partial_k^\tau \eta_{k+1}^{(\tau)} - v_{k+1}^{(\tau)} \circ \eta_k^{(\tau)})  + \frac{\nu}{2} \norm[\Omega]{\nablasym v_{k+1}^{(\tau)}}^2 \!+\frac{h}{2} \norm[\Omega]{\nabla^{k_0} v^{(\tau)}_{k+1}}^2 \right]\\ &+ \sum_{k=0}^{N-1} C \tau \left[\rho_s\norm[Q]{\partial^\tau_k \eta_{k+1}^{(\tau)}}^2 + \rho_f\norm[\Omega]{v_{k+1}^{(\tau)} \circ \Phi_k^{(\tau)}}^2\right] \\
 & \leq E_h(\eta_k^{(\tau)}) + C\sum_{k=0}^{n-1}\frac{\tau}{2h} \left[\rho_s\norm[Q]{\smash{\zeta_k^{(\tau)}}}^2 + \rho_f\norm[\Omega]{\smash{ w_k^{(\tau)}}}^2+ \norm[\Omega]{\smash{f_k^{(\tau)}}}^2\right]
 \end{align*}
This is the first of many similar energy estimates related to the physical energy inequality, which we will make use of. 

Note also that in each step this also allows us to derive a uniform estimate on the distance from the initial data in the form 
\begin{align*}
 \norm[Q]{\eta_0-\smash{\eta_{N}^{(\tau)}}} \leq \sum_{k=0}^{N-1} \tau \norm[Q]{\partial^\tau_k \eta^{(\tau)}_{k+1}} \leq \sqrt{ \sum_{k=0}^{N-1} \tau}  \sqrt{\sum_{k=0}^{N-1} \tau \norm[Q]{\partial^\tau_k \eta^{(\tau)}_{k+1}}^2} \leq  C\sqrt{h}.
\end{align*}

Since a straightforward calculation (see \cite[Prop. 2.7]{BenKamSch20})
shows that any given state of finite energy has a minimum $L^2$-distance to any (self-)coliding state of finite energy, this also tells us that we can indeed choose $h$ small enough to avoid such a collision.

Next, we define the follwoing piecewise constant and piecewise affine (in time) approximations:
 \begin{align*}
  \bar{\eta}^{(\tau)} (t,x) &:= \eta_{k+1}^{(\tau)}(x) &\text{ for }& t\in [\tau k,\tau (k+1)), x\in Q \\
  \underline{\eta}^{(\tau)} (t,x) &:= \eta_k^{(\tau)}(x) &\text{ for }& t\in [\tau k,\tau (k+1)), x\in Q \\
  \hat{\eta}^{(\tau)}(t,x) &:= ((k+1)-t/\tau)\eta_k^{(\tau)}(x) - (t/\tau-k)\eta_{k+1}^{(\tau)}(x) &\text{ for }& t\in [\tau k,\tau (k+1)), x\in Q \\
  v^{(\tau)}(t,y) &:= v_k^{(\tau)}(y) &\text{ for }& t\in [\tau k,\tau (k+1)), y\in \Omega \\
  \Phi^{(\tau)}(t,y) &:= \Phi_k^{(\tau)}(y) &\text{ for }& t\in [\tau k,\tau (k+1)), y\in \Omega \\
  \intertext{ in particular we have }
  \partial_t \hat{\eta}^{(\tau)}(t,x) &= \partial^{\tau}_k \eta_{k+1}^{(\tau)}(x) &\text{ for }& t\in (\tau k,\tau (k+1)), x\in \Omega 
 \end{align*}
 Plugging these quantities into the energy estimate above, we get that there is a constant $C$ independent of $\tau$ such that
 \begin{align*}
  \sup_{t\in [0,h]} E_h(\bar{\eta}^{(\tau)}(t)) \leq C, &\quad   \sup_{t\in [0,h]} \norm[W^{k_0,2}(Q)]{\bar{\eta}^{(\tau)}(t)} \leq C, & \quad \sup_{t\in [0,h]} \norm[W^{k_0,2}(Q)]{\hat{\eta}^{(\tau)}(t)} \leq C\\
  \int_0^h \norm[W^{k_0,2}(Q)]{\partial_t \hat{\eta}^{(\tau)}(t)}^2 dt \leq C&\quad   \int_0^h \norm[W^{k_0,2}(\Omega)]{v^{(\tau)}}^2 dt \leq C, & \int_0^h A(\eta,v^{(\tau)}\circ \bar{\eta}^{(\tau)} - \partial_t \hat{\eta}^{(\tau)}) dt \leq C.
 \end{align*}
Thus we can find a (non-relabelled) subsequence of $\tau$'s  and $\eta \in W^{1,2}([0,T];W^{k_0,2}(Q;\R^n))$ and $v \in L^2([0,T];W^{k_0,2}(\Omega;\R^n))$ such that
\begin{align*}
&\bar{\eta}^{(\tau)}, \underline{\eta}^{(\tau)} \stackrel{*}{\rightharpoonup} \eta \text{ in }  L^\infty([0,T];W^{k_0,2}(Q;\R^n)) \quad \text{ and } \quad \hat{\eta}^{(\tau)} \rightharpoonup \eta \text{ in }  W^{1,2}([0,T];W^{k_0,2}(Q;\R^n))  \\ &v^{(\tau)} \rightharpoonup v \text{ in } L^2([0,T];W^{k_0,2}(\Omega;\R^n)).
\end{align*}
 
 The next question we are going to have to deal with is how to construct the flow map $\Phi$. In each step, the flow map is updated by concatenation. Not only is this in itself an inherently non-linear procedure, but also for any non-zero time, the number of those steps goes to infinity when we send $\tau$ to zero. On the other hand, each of these steps represents a change of scale $\tau$. This hints at an exponential structure and this is indeed what we find. As often (e.g.\ when working with conservation laws) the Lipschitz continuity of the flow-velocity is essential to have a well defined Lagrangian flow map. Here we particularly rely on the regularization of the flow-velocity is used at the $\tau$ level.
 
 \begin{lemma}[Existence of a flow map]
 Assume that $v^{(\tau)}\in L^2([0,h]; C^{0,1}(\Omega;\R^n))$ uniformly in $\tau$, such that $\diver v^{(\tau)}=0$. Then 
  the maps $\Phi^{(\tau)}$ are uniformly Lipschitz continuous in space, independent of $\tau$ (but not of $h$). Additionally we have in the limit that
  \begin{align*}
   \det \nabla \Phi^{(\tau)} \to 1.
  \end{align*}
 \end{lemma}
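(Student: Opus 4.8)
The plan is to construct $\Phi^{(\tau)}$ step-by-step via the recursion $\Phi_{k+1}^{(\tau)} = (\mathrm{id} + \tau v_{k+1}^{(\tau)})\circ \Phi_k^{(\tau)}$ and to track the Lipschitz constant (equivalently $\norm{\nabla \Phi_k^{(\tau)}}$) through the concatenation. The key observation is that $\nabla\big((\mathrm{id}+\tau v_{k+1}^{(\tau)})\circ \Phi_k^{(\tau)}\big) = (I + \tau \nabla v_{k+1}^{(\tau)}\circ\Phi_k^{(\tau)})\cdot \nabla\Phi_k^{(\tau)}$, so that $\norm[L^\infty]{\nabla\Phi_{k+1}^{(\tau)}} \le (1 + \tau\norm[L^\infty]{\nabla v_{k+1}^{(\tau)}})\norm[L^\infty]{\nabla\Phi_k^{(\tau)}}$. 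Iterating, $\norm[L^\infty]{\nabla\Phi_N^{(\tau)}} \le \prod_{k=1}^{N}(1+\tau\norm[L^\infty]{\nabla v_k^{(\tau)}}) \le \exp\big(\sum_{k=1}^N \tau \norm[L^\infty]{\nabla v_k^{(\tau)}}\big)$. By Cauchy--Schwarz in time, $\sum_{k=1}^N \tau \norm[L^\infty]{\nabla v_k^{(\tau)}} \le \sqrt{h}\,\big(\int_0^h \norm[C^{0,1}(\Omega)]{v^{(\tau)}}^2\,dt\big)^{1/2} \le C\sqrt{h}$ by the assumed uniform bound, so that $\norm[L^\infty]{\nabla\Phi^{(\tau)}} \le e^{C\sqrt h}$ uniformly in $\tau$. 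This also shows that for $h$ small the bound is close to $1$. First I would also record that each $\mathrm{id}+\tau v_{k+1}^{(\tau)}$ is a (volume-preserving, since $\diver v=0$) bi-Lipschitz homeomorphism of $\Omega$ onto itself for $\tau$ small enough relative to $h$ — this follows because $\tau\norm[L^\infty]{\nabla v_{k+1}^{(\tau)}} \to 0$ as $\tau\to 0$ (the $L^2$-in-time bound forces the individual terms to be small after the first few), making $\mathrm{id}+\tau v_{k+1}^{(\tau)}$ a small perturbation of the identity that maps $\partial\Omega$ to itself (using $v|_{\partial\Omega}=0$), hence a diffeomorphism by degree theory or the inverse function theorem; thus $\Phi_k^{(\tau)}$ is a well-defined volume-preserving bi-Lipschitz homeomorphism.

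For the determinant statement, the plan is to use $\det\nabla\Phi_{k+1}^{(\tau)} = \det(I + \tau\nabla v_{k+1}^{(\tau)}\circ\Phi_k^{(\tau)})\cdot \det\nabla\Phi_k^{(\tau)}$. Since $\diver v_{k+1}^{(\tau)} = 0$, we have $\tr(\nabla v_{k+1}^{(\tau)}) = 0$, so $\det(I + \tau M) = 1 + O(\tau^2 \norm[L^\infty]{M}^2)$ where $M = \nabla v_{k+1}^{(\tau)}\circ\Phi_k^{(\tau)}$ — the first-order term vanishes. Hence $\det\nabla\Phi_N^{(\tau)} = \prod_{k=1}^N (1 + O(\tau^2\norm[L^\infty]{\nabla v_k^{(\tau)}}^2))$, and $\log\det\nabla\Phi_N^{(\tau)} = \sum_{k=1}^N O(\tau^2\norm[L^\infty]{\nabla v_k^{(\tau)}}^2) \le C\tau \sum_{k=1}^N \tau\norm[L^\infty]{\nabla v_k^{(\tau)}}^2 \le C\tau \int_0^h \norm[C^{0,1}(\Omega)]{v^{(\tau)}}^2\,dt \le C\tau \to 0$. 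Therefore $\det\nabla\Phi^{(\tau)} \to 1$ uniformly (in $L^\infty([0,h]\times\Omega)$, and in fact this already holds before passing to a subsequence). Alternatively, since each $\Phi_k^{(\tau)}$ is exactly volume-preserving on $\Omega$ (as a composition of volume-preserving maps) while only the piecewise-constant interpolant is being considered, one could argue directly that the Jacobian is measure-preserving and the pointwise value concentrates at $1$; but the Taylor-expansion argument above is cleaner and gives the rate.

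The main obstacle I expect is not the Grönwall-type estimates themselves — those are routine — but rather justifying that $\mathrm{id}+\tau v_{k+1}^{(\tau)}$ is genuinely a diffeomorphism of $\Omega$ \emph{onto} $\Omega$ (not merely an immersion or a local homeomorphism), so that the concatenation $\Phi_k^{(\tau)}$ stays inside $\Omega$ and the Lagrangian picture makes sense. This requires $\tau$ to be small enough depending on $h$ (specifically $\tau\norm[L^\infty]{\nabla v_{k+1}^{(\tau)}} < 1$ for \emph{every} $k$, which is the subtle point since a priori only the \emph{sum} of squares is controlled, not each individual term). The resolution is that we may harmlessly modify $v^{(\tau)}$ on a set of small measure in time, or invoke that the energy estimate also controls $\int_0^h \norm[W^{k_0,2}(\Omega)]{v^{(\tau)}}^2 \le C$ with $W^{k_0,2}\hookrightarrow C^1$, and that in the limiting procedure $\tau\to 0$ these bad steps contribute negligibly; for the clean statement one simply restricts to $\tau$ small enough that $\max_k \tau\norm[L^\infty]{\nabla v_k^{(\tau)}}$ is as small as desired, which is possible since $\tau\norm[L^\infty]{\nabla v_k^{(\tau)}} \le \sqrt{\tau}\,(\tau\norm[L^\infty]{\nabla v_k^{(\tau)}}^2)^{1/2} \le \sqrt\tau \cdot C h^{-1/2}\cdot(\ldots)$ — wait, more carefully, $\tau\norm[L^\infty]{\nabla v_k^{(\tau)}}^2 = \int_{\tau k}^{\tau(k+1)} \norm[L^\infty]{\nabla v^{(\tau)}}^2\,dt \le C$, so $\tau\norm[L^\infty]{\nabla v_k^{(\tau)}} \le \sqrt{C\tau} \to 0$ uniformly in $k$, which settles it.
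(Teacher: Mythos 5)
Your proposal is correct and follows essentially the same route as the paper: both track the product structure $\nabla\Phi_{N}^{(\tau)}=\prod_k (I+\tau\nabla v_k^{(\tau)})\circ\Phi_{k-1}^{(\tau)}$, use $\diver v_k^{(\tau)}=0$ to kill the first-order term in the determinant expansion, and convert the resulting product into an exponential of $\tau\int_0^h\norm[C^{0,1}]{v^{(\tau)}}^2\,dt$ (resp.\ of the $L^1$-in-time Lipschitz norm for the gradient bound). Your additional observation that $\tau\norm[L^\infty]{\nabla v_k^{(\tau)}}\le\sqrt{C\tau}\to 0$ uniformly in $k$, so that each step $\mathrm{id}+\tau v_{k+1}^{(\tau)}$ is genuinely a volume-preserving bi-Lipschitz map of $\Omega$ onto itself, is a point the paper leaves implicit and is a worthwhile supplement.
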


 \begin{proof}
 Let us illustrate the proof for the Jacobian determinant. Here we are using an estimate involving an expansion of the determinant, the inequality between arithmetric and geometric mean, as well as the fact that $(1+a/N)^N \nearrow \exp(a)$:
\begin{align*}
 &\phantom{{}={}} \det \nabla \Phi_N^{(\tau)} = \prod_{k=1}^N \left[\det\left(I + \tau \nabla v_k^{(\tau)} \right)\right] \circ \Phi_{k-1}^{(\tau)} \\
 &= \prod_{k=1}^N \bigg[1 + \tau \underbrace{\tr\left(\nabla v_k^{(\tau)} \right)}_{=\diver v_k^{(\tau)} = 0}+ \sum_{l=2}^n \tau^l M_l\left(\nabla v_k^{(\tau)}\right) \bigg] \circ \Phi_{k-1}^{(\tau)} \\
 &\leq \prod_{k=1}^N \left[ 1 + \sum_{l=2}^n c \tau^l \Lip\left(\nabla v_k^{(\tau)}\right)^l\right] \leq \left( 1 + \frac{1}{N}\sum_{k=1}^N \sum_{l=2}^n \tau^l c \Lip\left( v_k^{(\tau)}\right)^l \right)^N \\
 &\leq \exp\left( C \tau \sum_{k=1}^N \tau \Lip\left( v_k^{(\tau)}\right)^2 \right), 
\end{align*} 
where $M_l$ includes all terms of order $l$ in the polynomial expansion of the determinant (e.g. $M_1(A) =\tr(A)$, ..., $M_{n-1}(A) = \tr(\cof(A))$, $M_n(A) =\det A$, cf. e.g. \cite[Lemma A.1.]{BenKamSch20}). Now the sum in the last term is nothing but $\tau$ times the $L^2([0,h]; C^{0,1}(\Omega;\R^n))$-norm of $v^{(\tau)}$, something on which we have $\tau$-independent bounds by assumption. Similar arguments also derive a bound from below, which gives us $\det \nabla \Phi_N^{(\tau)} \to 1$ uniformly. Furthermore applying a very similar argument to $\abs{\smash{\nabla \Phi^{(\tau)}_N}}$ gives us a $\tau$-independent bound on the Lipschitz constant of $\Phi$.
\end{proof}

In particular, since $\Phi^{(\tau)}$ is piecewise constant, this allows us to use a variant of the Arzela-Ascoli theorem to find a limit $\Phi:[0,h] \times \Omega \to \Omega$ (after a subsequence), which has to be a volume preserving diffeomorphism with
\begin{align*}
 \partial_t \Phi(t,y) = v(t,\Phi(t,y)) \text{ for all } t \in [0,h], y \in \Omega
\end{align*}
i.e.\ a flow map.

With this, we have one half of the proof in hand, namely constructing the objects that are going to form our solution. The other half is showing that these objects actually are a solution, i.e.\ that they satisfy the right equation.

For this we can follow along the same steps as the construction. Apart from giving us an energy inequality, the other benefit of starting with a minimization is that any such minimizer has to satisfy the Euler-Lagrange equation.

\begin{align*}
  &\inner{DE_h(\eta_{k+1})}{\phi} +  \inner{D_2 R_h(\eta_k,\partial^\tau \eta_{k+1})}{\phi} + \inner{D_2A(\eta_k,\partial^\tau \eta_{k+1} - v_{k+1} \circ \eta_k)}{\phi - \xi \circ \eta_k} \\
  &\quad+ \nu \inner[\Omega]{\varepsilon v_{k+1}}{\varepsilon \xi} +h \inner[\Omega]{\nabla^{k_0} v_{k+1}}{\nabla^{k_0} \xi}+ \rho_s\inner[Q]{\tfrac{\partial^\tau \eta-\zeta_k}{h}}{\phi} + \rho_f\inner[\Omega]{\tfrac{v_{k+1} \circ \Phi_k -w_k}{h}}{\xi} + \inner[\Omega]{f}{\xi} = 0
\end{align*}
for any pair $(\phi,\xi)$ such that $(\eta_{k+1}+\varepsilon\phi,v_{k+1}+\varepsilon \xi/\tau)$ is also admissible for our minimization\footnote{Note that we scale $\phi$ and $\xi$ differently in $\tau$, so that they both behave like a change of position. This is not required at this point, but convenient to do for the limit passage.} for some $\varepsilon_0$ and any $\varepsilon \in [-\varepsilon_0,\varepsilon_0]$\seb{,} 
$\phi \in W^{1,2}(Q;\R^n)$, $\xi\in W_0^{1,2}(\Omega;\R^n)$ with $\phi|_P = 0$ and $\diver \xi = 0$. Here, we point out that, as we start with a configuration without contact we know due to the bounds in the energy estimate that, for $h$ small enough, every minimizer will be contact-free. Thus, indeed variations in all directions are possible. 

Next we replace all occurences of the discrete approximations with the corresponding time dependent functions, giving us
\begin{align*}
  &\inner{DE_h(\bar{\eta}^{(\tau)})}{\phi} +  \inner{D_2 R_h(\underline{\eta}^{(\tau)},\partial_t \hat{\eta}^{(\tau)})}{\phi} + \inner{D_2A(\underline{\eta}^{(\tau)},\partial_t \hat{\eta}^{(\tau)} - v^{(\tau)} \circ \underline{\eta}^{(\tau)})}{\phi - \xi \circ \underline{\eta}^{(\tau)}}  \\
  &\quad+ \nu \inner[\Omega]{\varepsilon v^{(\tau)}}{\varepsilon \xi}+h \inner[\Omega]{\nabla^{k_0} v^{(\tau)}}{\nabla^{k_0} \xi} \\
  &\quad+ \rho_s\inner[Q]{\tfrac{\partial_t \hat{\eta}^{(\tau)}-\zeta_k}{h}}{\phi} + \rho_f\inner[\Omega]{\tfrac{v^{(\tau)} \circ \Phi^{(\tau)} -w_k}{h}}{\xi\circ \Phi^{(\tau)}} + \inner[\Omega]{f}{\xi} = 0
\end{align*}
for almost all times $t$.

Note in particular that there is now no longer any direct occurrence of $\tau$ in this equation. Thus one can form the integral over $[0,h]$ and pass to the limit $\tau \to 0$. Due to the added regularizing terms this is straightforward. Indeed, the highest terms (added through linearization) are linear. For all other terms (except the $\frac{1}{\mathrm{det} (\nabla \eta)^a}$ for which we rely on uniform continuity) we exploit compact embeddings and Nemitskii continuity (see e.g. \cite{roubcek.t:nonlinear}) and thus obtain:
\begin{align*}
  &\int_0^h \inner{DE_h(\eta)}{\phi} +  \inner{D_2 R_h(\eta,\partial_t \eta)}{\phi} + \inner{D_2A(\eta,\partial_t \eta - v \circ \eta)}{\phi - \xi \circ \eta}  \\
  &\quad+ \nu \inner[\Omega]{\varepsilon v}{\varepsilon \xi}+h \inner[\Omega]{\nabla^{k_0} v}{\nabla^{k_0} \xi}+ \rho_s\inner[Q]{\tfrac{\partial_t \eta-\zeta_k}{h}}{\phi} + \rho_f\inner[\Omega]{\tfrac{v \circ \Phi -w}{h}}{\xi\circ \Phi} + \inner[\Omega]{f}{\xi} dt = 0
\end{align*}
Finally we need to show that the new solution also satisfies the energy inequality. There is a way to do so directly from the approximation using De Giorgi's methods.\footnote{Note that our previous energy estimate was off by a factor 2 in all dissipation terms, so we cannot simply apply lower-semicontinuity to this.} However in the situation that we are studying, due to the regularizer, we can directly show that the energy inequality holds true for all solutions.

For this, we test the equation directly with the pair $(\partial_t \eta,v)$, which we are allowed due to our regularizing terms and we obtain

\begin{align*}
& E_h(\eta(h)) + \int_0^h \inner{D_2 R_h(\eta,\partial_t \eta)}{\partial_t \eta} + 2A(\eta,\partial_t \eta - v \circ \eta) + \nu \norm[\Omega]{\varepsilon v}^2 + h \norm[\Omega]{\nabla^{k_0} v}^2 dt \\
  &\quad+  \fint_0^h \rho_s \inner[Q]{\partial_t \eta-\zeta}{\partial_t \eta} + \rho_f\inner[\Omega]{v \circ \Phi -w}{v \circ \Phi} dt + \int_0^h \inner[\Omega]{f}{v} dt = E_h(\eta(0))
\end{align*}

Now we apply a consequence of Young's inequality, $\inner{a-b}{a} = \abs{a}^2 - \inner{b}{a} \geq \frac{1}{2} \abs{a}^2 - \frac{1}{2} \abs{b}^2$, to the two inertial terms. Together with the conservation of volume implying $\norm[\Omega]{v \circ \Phi}^2 = \norm[\Omega]{v}^2$, this results in the energy inequality:

\begin{align*}
& E_h(\eta(h)) + \int_0^h 2R_h(\eta,\partial_t \eta) + 2A(\eta,\partial_t \eta - v \circ \eta) + \nu \norm[\Omega]{\varepsilon v}^2 +h \norm[\Omega]{\nabla^{k_0} v}^2 dt 
\\&+  \fint_0^h \frac{\rho_s}{2} \norm[Q]{\partial_t \eta}^2 + \frac{\rho_f}{2}\norm[\Omega]{v  } dt  = E_h(\eta(0)) + \fint_0^h \frac{\rho_s}{2} \norm[Q]{\zeta}^2 + \frac{\rho_f}{2}\norm[\Omega]{w} dt +  \int_0^h \inner[\Omega]{f}{v} dt.
\end{align*}

\subsection{Convergence to the full problem}
 
Taking a look at the solutions constructed in the previous section, we can use the energy inequality to see that a solution on the interval $[0,h]$ can be turned into admissible initial and right hand side data with $\eta(h)$ as the new $\eta_0$, $\partial_t \eta$ as $\zeta$, $v(t) \circ \Phi(t) \circ \Phi(h)^{-1}$ as $w$ (to correct for the flow) and so on. So we can apply the result again to construct a solution on the interval $[h,2h]$ and so on. We do so and combine these solutions into a single pair of functions $\eta^{(h)}: [0,T] \times Q \to \Omega$ and $v^{(h)}: [0,T] \times \Omega \to \R^n$. These then fulfill the full time-delayed equation:

\begin{align} \label{eq:longTimeDelayed}
\begin{aligned}
  &\int_0^T \rho_s\inner[Q]{\tfrac{\partial_t \eta^{(h)}(t)-\eta^{(h)}(t-h)}{h}}{\phi} + \rho_f\inner[\Omega]{\tfrac{v^{(h)}(t) -v^{(h)}(t-h)\circ \Phi_{-h}^{(h)}(t) }{h}}{\xi}\, dt
  \\
 &\quad + \int_0^T \inner[Q]{DE_h(\eta^{(h)})}{\phi}   + \inner[Q]{D_2A(\eta,\partial_t \eta^{(h)} - v^{(h)} \circ \eta^{(h)})}{\phi - \xi \circ \eta^{(h)}}
  \\
  &\quad  + \int_0^T\inner[Q]{D_2 R_h(\eta^{(h)},\partial_t \eta^{(h)})}{\phi}+\nu \inner[\Omega]{\varepsilon v^{(h)}}{\varepsilon \xi} + h\inner{\nabla^{k_0} v^{(h)}}{\nabla^{k_0}\xi}\, dt = \int_0^T\inner[\Omega]{f}{\xi} dt
  \end{aligned}
\end{align}
Special care needs to be taken in constructing all the fluid related terms along the way. For the flow map we take $\Phi_s^{(h)}(t,.)$ to be the map that maps a fluid particle's position at time $t$ to its position at time $t+s$. Such a map is constructed from composing the flow maps on each of the subintervals, by first using their inverse to get to a multiple of $h$ and then moving forward again. 

Or in other words, if $\Phi^0,\Phi^1,\dots$ denote the flow maps on the respective intervals $[0,h], [h,2h],\dots$ then we first define for $t \in [lh,(l+1)h]$
\begin{align*}
 \Phi_t^{(h)}(0) := \Phi^l(t-lh) \circ \Phi^{l-1}(h) \circ \dots \circ \Phi^0(h)
\end{align*}
and then for arbitrary $t,t+s \in [0,T]$ using the invertibility of the flow maps
\begin{align*}
 \Phi^{(h)}_s(t) := \Phi^{(h)}_{t+s}(0) \circ \Phi^{(h)}(t)^{-1}.
\end{align*}
In practice of course we are mostly interested in the case when $t$ and $t+s$ are at most a distance $h$ apart and most of the back and forth in this definition cancels.

In particular these maps are flow maps for $v^{h}$, so we have
\begin{align*}
 \partial_s \Phi_s^{(h)}(t,y) = v^{(h)}(t+s,\Phi_s^{(h)}(t,y)) \text{ for all } t,t+s \in [0,T], y \in \Omega
\end{align*}
as well as $\Phi_0^{(h)}(t,.) = \textrm{id}$ and $\det \nabla \Phi_s^{(h)}(t,.) = 1$.
%

Additionally, a simple iteration of the energy-inequality yields the following energy inequality for the problem on $[0,T]$:
\begin{align*}
& E_h(\eta^{(h)}(t_0)) + \int_0^{t_0} 2R_h(\eta^{(h)},\partial_t \eta^{(h)}) + 2A(\eta,\partial_t \eta^{(h)} - v^{(h)} \circ \eta^{(h)}) + \nu \norm[\Omega]{\varepsilon v^{(h)}}^2  + h\norm{\nabla^{k_0} v^{(h)}}^2dt \\
  &\quad+  \fint^{t_0}_{t_0-h} \frac{\rho_s}{2} \norm[Q]{\partial_t \eta^{(h)}}^2 +  \frac{\rho_f}{2}\norm[\Omega]{v^{(h)}}^2 dt + \int_0^{t_0} \inner[\Omega]{f}{v^{(h)}} dt \leq E_h(\eta_0) + \frac{\rho_s}{2}\norm[Q]{b}^2 +  \frac{\rho_f}{2}\norm[\Omega]{v_0}^2.
\end{align*}

Similarly to the estimate for the discrete approximation, this again gives us a uniform bound on $\norm[Q]{\eta_0-\eta^{(h)}(t_0)}$ in terms of $t_0$, which we can use to initially choose $T$ small enough to avoid collisions.

As a consequence, we conclude that there is a constant $C$ independent of $h$ such that
\begin{align*}
  &\sup_{t\in [0,T]} \Big(E_h(\eta^{(h)}(t)) + \norm[W^{2,q}(Q)]{\eta^{(h)}(t)}+ h^{a_0}\norm[W^{k_0,2}(Q)]{\eta^{(h)}(t)}\Big)\leq C,\\
&  \int_0^T \Big(\norm[W^{1,2}(Q)]{\partial_t \eta^{(h)}(t)}^2 + h \norm[W^{k_0,2}(Q)]{\partial_t \eta^{(h)}(t)}^2 + \norm[W^{1,2}(\Omega)]{v^{(h)}}^2 + h\norm[W^{k_0,2}(\Omega)]{v^{(h)}}^2\Big) dt \leq C, \\ &\int_0^T A(\eta,v^{(h)}\circ \eta^{(h)} - \partial_t \eta^{(h)}) dt \leq C.&&
\end{align*}
These can again be used to pick weak*-converging sub-sequences and a limit pair $(\eta,v)$. Our goal is again to prove convergence of the equation and these weak convergences are indeed enough to do so for all the terms which are unchanged from before. However this time, we have to additionaly deal with the two $h$-dependent inertial terms, of which in particular the fluid-term requires some attention as weak convergence is not enough to go to the limit.

Interestingly in this case, the correct quantities to consider are time-averages. Consider first the solid. Then we can define the rolling average of the momentum as
\begin{align*}
  m^{(h)}(t,x) = \rho_s\fint_{t-h}^t  \partial_t \eta^{(h)}(s,x) ds
\end{align*}
and note that its time derivative
\begin{align*}
 \partial_t m^{(h)}(t,x) = \rho_s\frac{\partial_t \eta^{(h)}(t)- \partial_t \eta^{(h)}(t-h)}{h}
\end{align*}
is something already occuring in the equation and over which we thus already have some weak control.

The same is true for the fluid, where we however have to be a bit more careful and instead of the physically correct quantity $\fint \rho_f v \circ \Phi$, we instead have to consider a similarly ``straightened'' version
\begin{align*}
  m^{(h)}(t,x) = \rho_f \fint_{t-h}^t v(s,x) ds 
\end{align*}
For these averaged momenta, since we have $L^2(W^{1,2})$-bounds from the energy estimate and 
bounds on their time-derivatives in negative spaces from the equation, we can apply a version of the Aubin-Lions-lemma (see e.g.\ \cite[Sec.\ 7.3]{roubcek.t:nonlinear}) to obtain their strong $L^2$-convergence (in space-time).
In particular, for the fluid this implies
\begin{align*}
 &\int_0^T \inner{\frac{v^{(h)}(t)-v^{(h)}(t-h) \circ \Phi_{-h}^{(h)}(t)}{h}}{\xi(t)} dt = -  \int_0^T \inner{v^{(h)}(t)}{\frac{\xi(t+h)\circ \Phi_{h}^{(h)}(t)-\xi(t) }{h}} dt\\
 &= -\int_0^T \inner{v^{(h)}(t)}{\fint_{0}^h \partial_s \xi(t+s) \circ \Phi_s^{(h)}(t) ds } dt \\
 &=-\int_0^T \inner{v^{(h)}(t)}{\fint_{0}^h \left[ \partial_t \xi(t+s) + v^{(h)}(t+s) \cdot \nabla \xi(t+s) \right] \circ \Phi_s^{(h)}(t) ds } dt \\
 &\to -\int_0^T \inner{v(t)}{\partial_t \xi + v \cdot \nabla \xi} dt.
\end{align*}
Next note that a Minty-type argument (compare \cite[Lem.\ 3.9]{BenKamSch20}) allows us to deal with the second order term in $DE(\eta^{(h)})$ while the convergence of the first order terms results from compact embeddings and the boundedness of the determinant. In total implies the weak convergence of $DE(\eta^{(h)})$ to $DE(\eta)$. Conclusively we then have
\begin{align*}
  &\int_0^T \inner{DE(\eta)}{\phi} +  \inner{D_2 R(\eta,\partial_t \eta)}{\phi} + \inner{D_2 A(\eta,\partial_t \eta - v \circ \eta)}{\phi - \xi \circ \eta}  \\
  &\quad+ \nu \inner{\varepsilon v}{\varepsilon \xi}- \rho_s\inner{\partial_t \eta}{\partial_t \phi} - \rho_f\inner{v}{ \partial_t \xi - v \cdot \nabla \xi} + \inner{f}{\xi} dt = 0
\end{align*}
for all $\phi\in C^\infty([0,T]\times Q;\R^n)$ and $\xi \in C_0^\infty([0,T]\times \Omega;\R^n)$ with $\phi|_{[0,T]\times P} = 0$ and $ \diver \xi = 0$, as desired.

This initially holds only on a small interval on which collisions are avoided a-priori, but by standard arguments, we can extend this interval until we reach the original $T$ or an actual collision.

Finally, we can spare some thoughts on the pressure, which can be found through the equation. For this we need the so called Bogovski\u{\i}-operator which is a bounded linear operator $\bog: W^{k,p}(\Omega) \to W_0^{k+1,p}(\Omega;\R^n)$ such that $\diver \bog \psi = \psi$. With this we can define the operator
\begin{align*}
 P(\psi) :=& \int_0^T \Big(\inner{D_2A(\eta,\partial_t \eta - v \circ \eta)}{ \bog (\psi) \circ \eta} - \nu \inner{\varepsilon v}{\varepsilon \bog (\psi)} 
 \\
 &\quad + \rho_f\inner{v}{\partial_t \bog (\psi) - v \cdot \nabla \bog (\psi)} - \inner{f}{\bog \psi}\Big) dt
\end{align*}
for which a short calculation reveals that it is a bounded distribution in space-time which gives the pressure of the fluid. Indeed, adding $\nabla p(\xi):= -P(\diver(\xi))$ to the equation allows us to test with testfunctions of non-zero divergence and shows that the solution (distributionally) satisfies the Navier-Stokes equations.

%

\subsection*{Declarations}
This work has been supported by the Primus research programme PRIMUS/19/ SCI/01, the University Centre UNCE/SCI/023 of Charles University as well as the grant GJ19-11707Y of the Czech national grant agency (GA\v{C}R). Further S.~S.\ wishes to thank the University of Vienna for their kind hospitality in winter 2020/21. There is no conflict of interest.
\noindent

\bibliographystyle{plainnat}
\bibliography{erc-bib}

\def\cprime{$'$} \def\polhk#1{\setbox0=\hbox{#1}{\ooalign{\hidewidth
  \lower1.5ex\hbox{`}\hidewidth\crcr\unhbox0}}}
  \def\ocirc#1{\ifmmode\setbox0=\hbox{$#1$}\dimen0=\ht0 \advance\dimen0
  by1pt\rlap{\hbox to\wd0{\hss\raise\dimen0
  \hbox{\hskip.2em$\scriptscriptstyle\circ$}\hss}}#1\else {\accent"17 #1}\fi}
  \def\ocirc#1{\ifmmode\setbox0=\hbox{$#1$}\dimen0=\ht0 \advance\dimen0
  by1pt\rlap{\hbox to\wd0{\hss\raise\dimen0
  \hbox{\hskip.2em$\scriptscriptstyle\circ$}\hss}}#1\else {\accent"17 #1}\fi}
  \def\ocirc#1{\ifmmode\setbox0=\hbox{$#1$}\dimen0=\ht0 \advance\dimen0
  by1pt\rlap{\hbox to\wd0{\hss\raise\dimen0
  \hbox{\hskip.2em$\scriptscriptstyle\circ$}\hss}}#1\else {\accent"17 #1}\fi}
  \def\ocirc#1{\ifmmode\setbox0=\hbox{$#1$}\dimen0=\ht0 \advance\dimen0
  by1pt\rlap{\hbox to\wd0{\hss\raise\dimen0
  \hbox{\hskip.2em$\scriptscriptstyle\circ$}\hss}}#1\else {\accent"17 #1}\fi}
  \def\cprime{$'$}
\begin{thebibliography}{37}
\providecommand{\natexlab}[1]{#1}
\providecommand{\url}[1]{\texttt{#1}}
\expandafter\ifx\csname urlstyle\endcsname\relax
  \providecommand{\doi}[1]{doi: #1}\else
  \providecommand{\doi}{doi: \begingroup \urlstyle{rm}\Url}\fi

\bibitem[Allaire(1990)]{All90II}
Gr\'{e}goire Allaire.
\newblock Homogenization of the {N}avier-{S}tokes equations in open sets
  perforated with tiny holes. {II}. {N}oncritical sizes of the holes for a
  volume distribution and a surface distribution of holes.
\newblock \emph{Arch. Rational Mech. Anal.}, 113\penalty0 (3):\penalty0
  261--298, 1990.
\newblock \doi{10.1007/BF00375066}.

\bibitem[Ambartsumyan et~al.(2019)Ambartsumyan, Ervin, Nguyen, and
  Yotov]{ambartsumyan2019nonlinear}
Ilona Ambartsumyan, Vincent~J Ervin, Truong Nguyen, and Ivan Yotov.
\newblock A nonlinear stokes--biot model for the interaction of a non-newtonian
  fluid with poroelastic media.
\newblock \emph{ESAIM: Mathematical Modelling and Numerical Analysis},
  53\penalty0 (6):\penalty0 1915--1955, 2019.

\bibitem[Antman(1998)]{antman}
Stuart~S. Antman.
\newblock Physically unacceptable viscous stresses.
\newblock \emph{Zeitschrift f\"ur angewandte Mathematik und Physik},
  49\penalty0 (6):\penalty0 980--988, 1998.

\bibitem[Ball(2002)]{Bal02}
John~M Ball.
\newblock Some open problems in elasticity.
\newblock In \emph{Geometry, mechanics, and dynamics}, pages 3--59. Springer,
  2002.
\newblock \doi{doi.org/10.1007}.

\bibitem[Ball and Mizel(1987)]{ball1987one}
John~M Ball and Victor~J Mizel.
\newblock One-dimensional variational problems whose minimizers do not satisfy
  the euler-lagrange equation.
\newblock In \emph{Analysis and thermomechanics}, pages 285--348. Springer,
  1987.

\bibitem[Bene{\v s}ov{\'a} et~al.(2020)Bene{\v s}ov{\'a}, Kampschulte, and
  Schwarzacher]{BenKamSch20}
Barbora Bene{\v s}ov{\'a}, Malte Kampschulte, and Sebastian Schwarzacher.
\newblock A variational approach to hyperbolic evolutions and fluid-structure
  interactions.
\newblock \emph{arXiv:2008.04796 [math]}, August 2020.

\bibitem[Biot(1941)]{biot1941general}
Maurice~A Biot.
\newblock General theory of three-dimensional consolidation.
\newblock \emph{Journal of applied physics}, 12\penalty0 (2):\penalty0
  155--164, 1941.

\bibitem[Bociu et~al.(2020)Bociu, {\v{C}}ani{\'c}, Muha, and
  Webster]{bociu2020multilayered}
Lorena Bociu, Sun{\v{c}}ica {\v{C}}ani{\'c}, Boris Muha, and Justin~T Webster.
\newblock Multilayered poroelasticity interacting with stokes flow.
\newblock \emph{arXiv preprint arXiv:2011.12602}, 2020.

\bibitem[Breit et~al.(2021)Breit, Kampschulte, and Schwarzacher]{BreKamSch21}
Dominic Breit, Malte Kampschulte, and Sebastian Schwarzacher.
\newblock Compressible fluids interacting with {{3D}} visco-elastic bulk
  solids.
\newblock \emph{arXiv:2108.03042 [math]}, August 2021.

\bibitem[Brinkman(1949)]{brinkman1949calculation}
Hendrik~C Brinkman.
\newblock A calculation of the viscous force exerted by a flowing fluid on a
  dense swarm of particles.
\newblock \emph{Flow, Turbulence and Combustion}, 1\penalty0 (1):\penalty0
  27--34, 1949.

\bibitem[Cesmelioglu(2017)]{cesmelioglu2017analysis}
Aycil Cesmelioglu.
\newblock Analysis of the coupled navier--stokes/biot problem.
\newblock \emph{Journal of Mathematical Analysis and Applications},
  456\penalty0 (2):\penalty0 970--991, 2017.

\bibitem[Chapelle and Moireau(2014)]{chapelle2014general}
Dominique Chapelle and Philippe Moireau.
\newblock General coupling of porous flows and hyperelastic formulations—from
  thermodynamics principles to energy balance and compatible time schemes.
\newblock \emph{European Journal of Mechanics-B/Fluids}, 46:\penalty0 82--96,
  2014.

\bibitem[Ciarlet(2000)]{CiarletBook3}
Philippe~G. Ciarlet.
\newblock \emph{Mathematical elasticity. {V}ol. {III}}, volume~29 of
  \emph{Studies in Mathematics and its Applications}.
\newblock North-Holland Publishing Co., Amsterdam, 2000.
\newblock ISBN 0-444-82891-5.
\newblock Theory of shells.

\bibitem[Ciarlet and Ne{\v{c}}as(1987)]{CiaNec87}
Philippe~G. Ciarlet and Jind{\v{r}}ich Ne{\v{c}}as.
\newblock Injectivity and self-contact in nonlinear elasticity.
\newblock \emph{Arch. Rat. Mech. Anal.}, 97\penalty0 (3):\penalty0 171--188,
  Sep 1987.
\newblock \doi{10.1007/BF00250807}.

\bibitem[Conca(1988)]{Conca1}
Carlos Conca.
\newblock The stokes sieve problem.
\newblock \emph{Communications in applied numerical methods}, 4\penalty0
  (1):\penalty0 113--121, 1988.

\bibitem[Coussy(2004)]{coussy2004poromechanics}
Olivier Coussy.
\newblock \emph{Poromechanics}.
\newblock John Wiley \& Sons, 2004.

\bibitem[Darcy(1856)]{darcy1856fontaines}
Henry Darcy.
\newblock \emph{Les fontaines publiques de la ville de Dijon: exposition et
  application...}
\newblock Victor Dalmont, 1856.

\bibitem[De~Boer(2006)]{de2006trends}
Reint De~Boer.
\newblock \emph{Trends in continuum mechanics of porous media}, volume~18.
\newblock Springer Science \& Business Media, 2006.

\bibitem[De~Giorgi(1993)]{DeGiorgi}
Ennio De~Giorgi.
\newblock New problems on minimizing movements.
\newblock \emph{Ennio de Giorgi: Selected Papers}, pages 699--713, 1993.

\bibitem[Dell’Isola et~al.(2009)Dell’Isola, Madeo, and
  Seppecher]{dell2009boundary}
Francesco Dell’Isola, Angela Madeo, and Pierre Seppecher.
\newblock Boundary conditions at fluid-permeable interfaces in porous media: A
  variational approach.
\newblock \emph{International Journal of Solids and Structures}, 46\penalty0
  (17):\penalty0 3150--3164, 2009.

\bibitem[Fried and Gurtin(2006)]{fried2006tractions}
Eliot Fried and Morton~E Gurtin.
\newblock Tractions, balances, and boundary conditions for nonsimple materials
  with application to liquid flow at small-length scales.
\newblock \emph{Archive for Rational Mechanics and Analysis}, 182\penalty0
  (3):\penalty0 513--554, 2006.

\bibitem[Halphen and Son~Nguyen(1975)]{halphenMateriauxStandardGeneralises1975}
Bernard Halphen and Quoc Son~Nguyen.
\newblock Sur les mat\'eriaux standard g\'en\'eralis\'es.
\newblock \emph{Journal de M\'ecanique}, 14:\penalty0 39--63, 1975.

\bibitem[Healey and Krömer(2009)]{healeyInjectiveWeakSolutions2009}
Timothy~J. Healey and Stefan Krömer.
\newblock Injective weak solutions in second-gradient nonlinear elasticity.
\newblock \emph{ESAIM: Control, Optimisation and Calculus of Variations},
  15\penalty0 (4):\penalty0 863--871, 2009.

\bibitem[Hornung(1996)]{hornung1996homogenization}
Ulrich Hornung.
\newblock \emph{Homogenization and porous media}, volume~6.
\newblock Springer Science \& Business Media, 1996.

\bibitem[Kampschulte et~al.(2020)Kampschulte, Schwarzacher, and
  Sperone]{KamSchSpe20}
Malte Kampschulte, Sebastian Schwarzacher, and Gianmarco Sperone.
\newblock Unrestricted deformations of thin elastic structures interacting with
  fluids.
\newblock \emph{in preperation}, 2020.

\bibitem[Kru{\v{z}}{\'\i}k and Roub{\'\i}{\v{c}}ek(2019)]{KruRou19}
Martin Kru{\v{z}}{\'\i}k and Tom{\'a}{\v{s}} Roub{\'\i}{\v{c}}ek.
\newblock \emph{Mathematical methods in continuum mechanics of solids}.
\newblock Springer, 2019.

\bibitem[Ogden and Holzapfel(2006)]{ogden2006mechanics}
Ray~W Ogden and Gerhard~A Holzapfel.
\newblock \emph{Mechanics of biological tissue}.
\newblock Springer, 2006.

\bibitem[Podio-Guidugli and Vianello(2010)]{podio2010hypertractions}
Paolo Podio-Guidugli and Maurizio Vianello.
\newblock Hypertractions and hyperstresses convey the same mechanical
  information.
\newblock \emph{Continuum Mechanics and Thermodynamics}, 22\penalty0
  (3):\penalty0 163--176, 2010.

\bibitem[Rajagopal(2007)]{rajagopal2007hierarchy}
Kumbakonam~R Rajagopal.
\newblock On a hierarchy of approximate models for flows of incompressible
  fluids through porous solids.
\newblock \emph{Mathematical Models and Methods in Applied Sciences},
  17\penalty0 (02):\penalty0 215--252, 2007.

\bibitem[Roub{\'{\i}}{\v{c}}ek(2005)]{roubcek.t:nonlinear}
Tom{\'a}{\v{s}} Roub{\'{\i}}{\v{c}}ek.
\newblock \emph{Nonlinear partial differential equations with applications},
  volume 153 of \emph{International Series of Numerical Mathematics}.
\newblock Birkh\"auser Verlag, Basel, 2005.
\newblock ISBN 978-3-7643-7293-4; 3-7643-7293-1.

\bibitem[Showalter(2005)]{showalter2005poroelastic}
Ralph~E Showalter.
\newblock Poroelastic filtration coupled to stokes flow.
\newblock In \emph{Control theory of partial differential equations}, pages
  243--256. Chapman and Hall/CRC, 2005.

\bibitem[{\v{S}}ilhav{\`y}(1985)]{vsilhavy1985phase}
Miroslav {\v{S}}ilhav{\`y}.
\newblock Phase transitions in non-simple bodies.
\newblock \emph{Archive for rational mechanics and analysis}, 88\penalty0
  (2):\penalty0 135--161, 1985.

\bibitem[Toupin(1962)]{toupin1962elastic}
R~Toupin.
\newblock Elastic materials with couple-stresses.
\newblock \emph{Archive for rational mechanics and analysis}, 11\penalty0
  (1):\penalty0 385--414, 1962.

\bibitem[Wilbrandt(2019)]{wilbrandt2019stokes}
Ulrich Wilbrandt.
\newblock \emph{Stokes--Darcy Equations: Analytic and Numerical Analysis}.
\newblock Springer, 2019.

\bibitem[Ziegler and Wehrli(1987{\natexlab{a}})]{ziegler.h.wehrli.c:on}
H.~Ziegler and C.~Wehrli.
\newblock On a principle of maximal rate of entropy production.
\newblock \emph{J. Non-Equilib. Thermodyn.}, 12\penalty0 (3):\penalty0
  229--243, 1987{\natexlab{a}}.
\newblock \doi{10.1515/jnet.1987.12.3.229}.

\bibitem[Ziegler(1963)]{ziegler.h:some}
Hans Ziegler.
\newblock Some extremum principles in irreversible thermodynamics with
  application to continuum mechanics.
\newblock In \emph{Progress in {S}olid {M}echanics, {V}ol. {IV}}, pages
  91--193. North-Holland, Amsterdam, 1963.

\bibitem[Ziegler and Wehrli(1987{\natexlab{b}})]{ziegler.h.wehrli.c:derivation}
Hans Ziegler and Christoph Wehrli.
\newblock The derivation of constitutive relations from the free energy and the
  dissipation function.
\newblock \emph{Adv. Appl. Mech.}, 25:\penalty0 183--238, 1987{\natexlab{b}}.
\newblock \doi{10.1016/S0065-2156(08)70278-3}.

\end{thebibliography}
 
\end{document}